\numberwithin{equation}{section}
\newcommand{\ep}{\varepsilon}
\def\d{\displaystyle}
\def\bs{\boldsymbol}
\def\ov{\overline}
\def\p{\partial}
\def\u{\boldsymbol{u}}
\def\v{\boldsymbol{v}}
\def\R{\mathbb{R}}
\def\Rn{\mathbb{R}^n}
\def\Zn{\mathbb{Z}^n}
\def\w{\boldsymbol{w}}
\def\g{\Gamma_0^{\ep}}
\def\t{\boldsymbol{\theta}}
\def\He1{( H^1_{\g}({\mathcal{O}}_{\ep}^*)) ^n}
\def\H1{(H^1({\mathcal{O}}_{\ep}^*))^n}
\def\w{\rightharpoonup}
\newtheorem{thm}{ \bf Theorem}[section]
\newtheorem{lem}[thm]{ \bf Lemma}
\newtheorem{prop}[thm]{ \bf Proposition}
\newtheorem{defn}[thm]{ \bf Definition}
\newcommand{\abs}[1]{\lvert#1\rvert}
\newcounter{cnt1}
\newcounter{cnt2}
\newcommand{\blr}{\begin{list}{$($\roman{cnt1}$)$}
		{\usecounter{cnt1} \setlength{\topsep}{0pt}
			\setlength{\itemsep}{0pt}}}
	\newcommand{\bla}{\begin{list}{$($\alph{cnt2}$)$}
			{\usecounter{cnt2} \setlength{\topsep}{0pt}
				\setlength{\itemsep}{0pt}}}
		\title{Optimal control problem for Stokes system: Asymptotic analysis via unfolding method in a perforated domain }
\begin{document}
			\author{Swati Garg and Bidhan Chandra Sardar\footnotemark[1]\\
		\small{	Department of Mathematics} \\
			\small{Indian Institute of Technology Ropar}\\
		\small{	Rupnagar-140001, Punjab, India}\\
			\small{swati.19maz0006@iitrpr.ac.in, swatigargmks@gmail.com}\\ \small{bcsardar@iitrpr.ac.in, bcsardar31@gmail.com}}
			
			\maketitle
			\maketitle
			\footnotetext{AMS subject classifications: 35B27, 35B40, 35Q93, 49J20, 76D07} 
			\footnotetext[1]{Corresponding author}
			
\begin{abstract}
This article's subject matter is the study of the asymptotic analysis of the optimal control problem  (OCP) constrained by the stationary Stokes equations in a periodically perforated domain.  We subject the interior region of it with distributive controls. The Stokes operator considered involves the oscillating coefficients for the state equations. We characterize the optimal control and, upon employing the method of periodic unfolding, establish the convergence of the solutions of the considered OCP to the solutions of the limit OCP governed by stationary Stokes equations over a non-perforated domain. The convergence of the cost functional is also established.
 \end{abstract}
{\bf Keywords:} Stokes equations, Homogenization, Optimal control, Perforated domain, Unfolding operator

\section{Introduction}
In this article, we consider the optimal control problem (OCP) governed by generalized stationary Stokes equations in a periodically perforated domain  $\mathcal{O}^*_{\ep}$ (see Section \ref{sec2}, on the domain description). The size of holes in the perforated domain is of the same order as that of the period, and the holes are allowed to intersect the boundary of the domain. The control is applied in the interior region of the domain, and we wish to study the asymptotic analysis (homogenization) of an interior OCP subject to the constrained stationary Stokes equations with oscillating coefficients.	
\par One can find several works in the literature regarding the homogenization of Stokes equations over a perforated domain. Using the multiple-scale expansion method, the authors in \cite{HE1975}  studied the homogenization of Stokes equations in a porous medium with the Dirichlet boundary condition on the boundary of the holes. They obtained the Darcy's law as the limit law in the homogenized medium. In \cite{CDE1996}, the authors  considered the Stokes system in a periodically perforated domain with non-homogeneous slip boundary conditions depending upon some parameter $\gamma$. Upon employing the  Tartar's method of oscillating test functions  they obtained under homogenization, the limit laws, viz., Darcy's law ( for $\gamma < 1$), Brinkmann's law (for $\gamma =1$), and Stokes's type law (for $\gamma > 1$). In \cite{Zaki2012}, the author studied a similar problem using the method of periodic unfolding in perforated domains by \cite{CDZ2006}. Further, the type of behavior as seen in \cite{CDE1996} was already observed in \cite{CC1985} by the authors while studying the homogeneous Fourier boundary conditions for the two-dimensional Stokes equation. Likewise, in \cite{A1991, A1989},  the author examined the Stokes equation in a perforated domain with holes of size much smaller than the small positive parameter $\ep$, wherein they considered the boundary conditions on the holes to be of the Dirichlet type in \cite{A1989} and the slip type in \cite{A1991}. The domain geometry, more specifically, the size of the holes, determines the kind of limit law in these works. Also, 
the author in \cite{B1996}  employed the $\Gamma-$ convergence techniques to get comparable results.
\par A few works concern the homogenization of the  OCPs governed by the elliptic systems over the periodically perforated domains with different kinds of boundary conditions on the boundary of holes (of the size of the same order as that of the period). In this regard, with the use of different techiniques, viz., $H_0-$ convergence in \cite{KS1997}, two-sclae convergence in \cite{ MN2009}, and unfolding methods in  \cite{C2016, M2022}, the homogenized OCPs were thus obtained over the non-perforated domains. Further, in context to the Stokes system, the authors in \cite{MN2008} studied the homogenization  of the OCPs subject to the Stokes equations with Dirichlet boundary conditions on the boundary of holes, where the  size of the holes is of the same order as that of the period. Here, the authors could obtain the homogenized system, pertaining only to the case when the set of admissible controls was unconstrained. For more literature concerning the homogenization of optimal control problems in perforated domains, the reader is reffered to \cite{KS1999, CDJM2016, PZ2002, DPS2022, DPS20221} and the references therein.
\par The present article introduces an interior OCP subject to the generalized stationary Stokes equations in a periodically perforated domain $\mathcal{O}^*_{\ep}$. On the boundary of holes that do not intersect the outer boundary, the homogeneous Neumann boundary condition is prescribed, while on the rest part of the boundary, the homogeneous Dirichlet boundary condition is prescribed. The underlying objective of this article is to study the homogenization of this OCP.   More specifically, we consider the minimization of the $L^2-$cost functional $(\ref{OCPE})$, which is subject to the constrained generalized stationary Stokes equations \eqref{2e2}.
\par The Stokes equations are generalized in the sense that we consider a second-order elliptic linear differential operator in divergence form with oscillating coefficients, i.e., $-\operatorname{div}\left( { A_{\ep}\nabla}\right)$, first studied for the fixed domain in \cite[Chapter 1]{BLP1978}, instead of the classical Laplacian operator. Here, the action of the  scalar operator $-\operatorname{div}\left( { A_{\ep}\nabla}\right)$ is defined in a "diagonal" manner on any vector $\bs{u} = (u_1, \dots, u_n)$, with components $u_1, \dots, u_n$ in the $H^1$ Sobolev space. That is, for $1\le i \le n,$ we have $\left(-\operatorname{div}\left( { A_{\ep}\nabla \boldsymbol{u}}\right)\right)_i = -\operatorname{div}\left( { A_{\ep}\nabla {u}_{i}}\right)$.  
The main difficulty observed during the homogenization was identifying the limit pressure terms appearing in the state and the adjoint systems, which we overcame by introducing suitable corrector functions that solved some cell problems.  We thus obtained the limit OCP associated with the stationary Stokes equation in a non-perforated domain.
\par The layout of this article is as follows: In the next section, we introduce the periodically perforated domain $\mathcal{O}_{\ep}^*$ along with the notations that will be useful in the sequel. Section \ref{Section 3} is devoted to a detailed description of the considered OCP and the derivation of the optimality condition, followed by the characterization of the optimal control.  In Section \ref{Section 5}, we  derive a priori estimates of the solutions to the considered OCP and its corresponding adjoint problem. In Section \ref{Section 4}, we recall the definition of the method of periodic unfolding in perforated domains (see, \cite{CDGO2006, CDDGZ2012}) and a few of its properties. Section \ref{Section 6}, refers to the limit (homogenized) OCP. Finally,  we derive the main convergence results in Section \ref{Section 7}. 
			
\section{Domain description and Notation }\label{sec2}
\subsection{Domain description}
Let $\{b_1, . . . , b_n\}$ be a basis of $\Rn \, (n\ge 2),$ and  $W$ be the associated reference cell defined as
\begin{equation*}
W = \left\{ w\in \Rn\, |\,  w = \sum_{i=1}^{n} w_i b_i, \, (w_1,\dots, w_n) \in (0, 1)^n \right\}.
\end{equation*}
Let us denote $\mathcal{O}$, $W,$ and $W^* = W\backslash Y
$ by an open bounded subset of $\Rn$, a compact subset of $\overline{W}$, and the perforated reference cell, respectively. It is assumed that the boundary of $Y$ is Lipschitz continuous and has a finite number of connected components.\\
Also, let $\ep > 0$ be a sequence that converges to zero and set 
\begin{equation*}
				\mathcal{T} = \left\{ \zeta \in \Rn\, |\,  \zeta = \sum_{i=1}^{n} z_i b_i, \, (z_1, \dots , z_n) \in \Zn \right\}, \quad  	\mathcal{Z}_{\ep} =  \left\{ \zeta \in \mathcal{T} \, |\, \ep (\zeta + W) \subset \mathcal{O}\right\}.
\end{equation*}
We take into account the perforated domain $\mathcal{O}_\ep^*$ (see Figure \ref{s1}) given by
$
			\mathcal{O}_\ep^* = \mathcal{O} \backslash Y_{\ep}, 
$
where $ { Y_{\ep} = \cup_{\zeta \in \mathcal{T}}\, \ep(\zeta + Y)}$.
\begin{figure}[ht!]
				\centering
				\includegraphics[ width=90mm]{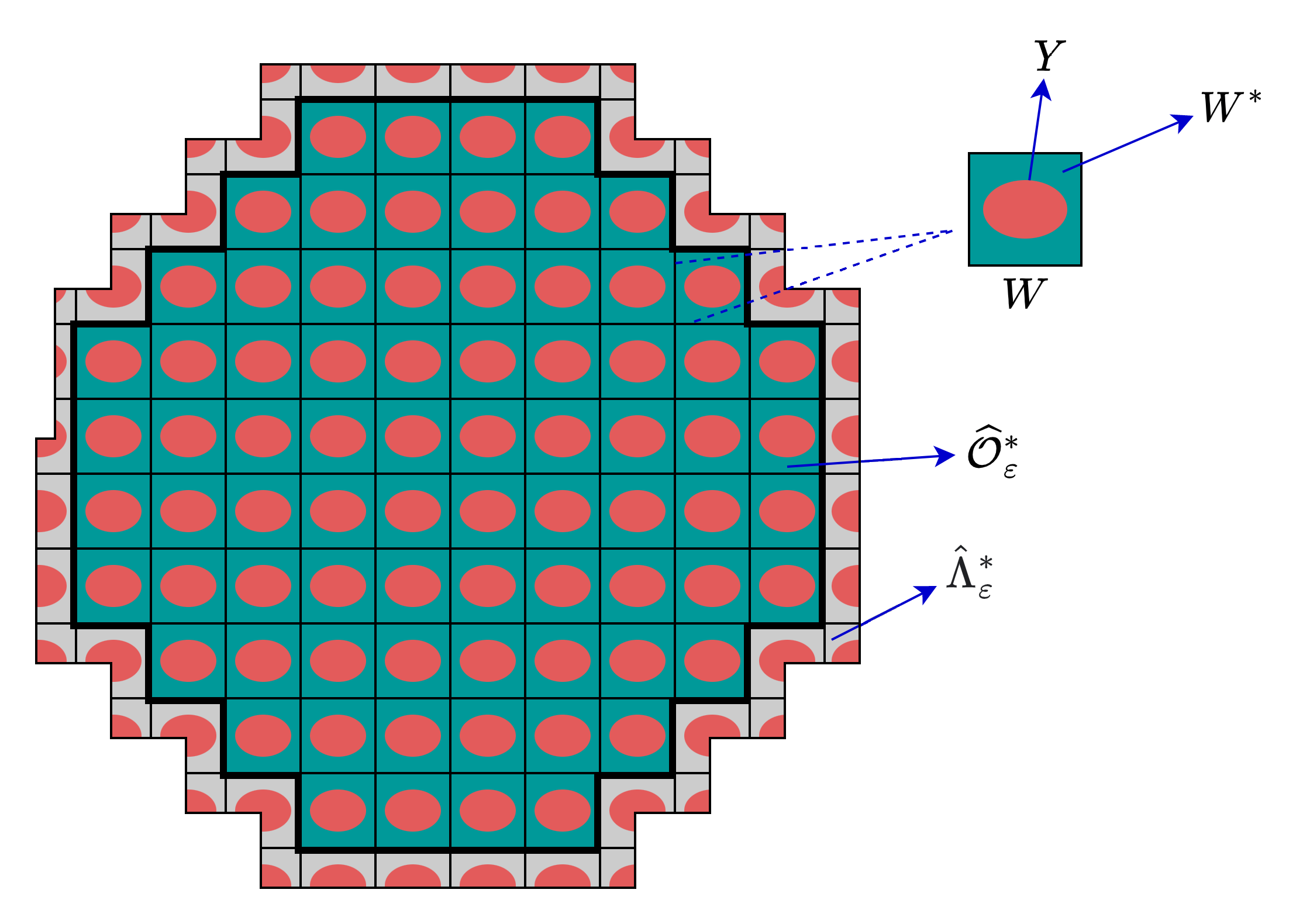}
				\caption{ The Perforated domain $\mathcal{O}_\ep^*$ and the reference cell W. \label{s1}}
\end{figure}
Now, let us denote  $\widehat{\mathcal{O}}_{\ep}$ as the interior of the largest union of $\ep(\zeta + \overline{W})$	cells such that $\ep(\zeta + {W}) \subset {\mathcal{O}}$, while  $\Lambda_{\ep} \subset {\mathcal{O}}$ as containing the parts from $\ep(\zeta + \overline{W})$ cells intersecting the boundary $\p{\mathcal{O}}$. More precisely, we write $\Lambda_{\ep} = {\mathcal{O}} \backslash \widehat{\mathcal{O}}_{\ep}$, where
\begin{equation*}
				\widehat{\mathcal{O}}_{\ep} = interior \left\{ \cup_{\zeta \in \mathcal{Z}_{\ep}} \, \ep(\zeta + \overline{W}) \right\}.
\end{equation*}
The associated perforated domains are defined as 
\begin{equation*}
				\widehat{\mathcal{O}}_{\ep}^* = \hat{\mathcal{O}}_{\ep} \backslash Y_{\ep}, \quad \hat{\Lambda}_{\ep}^* = {\mathcal{O}}_{\ep}^* \backslash \widehat{\mathcal{O}}_{\ep}^*.
\end{equation*}
Also, we denote the boundary of the perforated domain ${\mathcal{O}}_{\ep}^*$ as
\begin{equation*}
				\p{\mathcal{O}}_{\ep}^* = \Gamma_1^{\ep} \cup \Gamma_0^{\ep}, \quad \text{where}\,  \Gamma_1^{\ep} = \p\widehat{\mathcal{O}}_{\ep} \cap \p Y_{\ep} \, \text{and} \,  \Gamma_0^{\ep} = \p{\mathcal{O}}_{\ep}^* \backslash \Gamma_1^{\ep},
\end{equation*}
which means that $\Gamma_1^{\ep}$ denotes the boundary of set of holes contained in $\widehat{\mathcal{O}}_{\ep}$.\\
In Figure \ref{s1}, $\widehat{\mathcal{O}}_{\ep}^*$ and $\hat{ \Lambda}^*_{\ep}$ respectively represent the dark perforated part and the remaining part of the perforated domain ${\mathcal{O}}_{\ep}^*$. While, $\Gamma_1^{\ep}$ and $\Gamma_0^{\ep}$ respectively represent the boundary of holes contained in $\widehat{\mathcal{O}}_{\ep}^*$ and the boundary of holes contained in $\hat{\Lambda}^*_{\ep}$ along with the outer boundary $\p{\mathcal{O}}$. In the following, we introduce a few notations that we shall use throughout this article.
			
\subsection{Notation}
\begin{itemize}
	            \item $A_{\ep}(x) = A(\frac{x}{\ep})$ a.e. in ${\mathcal{O}}$, for all $\ep > 0$.
				\item $\bs v_{\bs\ep} = (v_{\ep1},\dots, v_{\ep n}),$ for any bold symbol vector function $\bs v_{\ep}$.
				\item $\bs v = (v_1, \dots, v_n),$ for any bold symbol vector function $\bs v$.
				\item $\bs \eta_{\bs\ep}$ denotes the  outward normal unit vector to $\Gamma^{\ep}_1$.
				\item $\bs  \eta $ denotes the  outward normal unit vector to $\p {\mathcal{O}}$.
				\item $M^t$ denotes the transpose of any matrix $M$.
				\item $\widetilde{\psi}$ is the zero extension  of any function $\psi$ outside ${\mathcal{O}}_{\ep}^*$ to the whole of ${\mathcal{O}}$.
				\item $\widetilde{\bs\psi} = (\widetilde{\psi_1}, \cdots, \widetilde{\psi_n})$, for any vector function $\bs\psi$.
				\item $|F|$ is the Lebesgue measure of the measurable set $F$.
				\item $\Theta = \frac{|W^*|}{|W|}$, the proportion of the perforated reference cell $W^*$ in the reference cell $W$.
				\item $\mathcal{M}_{W^*}(\phi)$ is the mean value of $\phi$ on the perforated reference cell $W^*$.
				\item $\mathcal{M}_{W^*}(\bs\phi)= (\mathcal{M}_{W^*}(\phi_1), \cdots, \mathcal{M}_{W^*}(\phi_n))$, for vector function $\bs \phi$.
				\item $\{D\to\mathbb{R}\}$, the set of all real valued functions defined on domain $D$.
				\item $\mathcal{D}(\Omega)$, is the space of infinitely many times differentiable functions with compact support in $\Omega$, for any open set $\Omega \in \R^n$.
\end{itemize}
			
\section{Problem description and Optimality condition}\label{Section 3}
Let us consider the following OCP associated with Stokes system:
\begin{equation}\label{OCPE}
				       \inf_{{\bs{\t}_{\bs\ep}} \in (L^2({\mathcal{O}}_{\varepsilon}^*))^n} \left\lbrace J_{\ep}(\boldsymbol{\t}_{\bs\ep})=\frac{1}{2} \int_{{\mathcal{O}}_{\ep}^*} \abs{\u_{\bs\ep}(\t_{\bs\ep})-\u_{\bs d}}^{2} +\frac{\tau}{2}\int_{{\mathcal{O}}_{\ep}^*} \abs{\boldsymbol{\t}_{\bs\ep}}^{2}\right\rbrace,
\end{equation}	
subject to 
\begin{equation}\label{2e2}
			       	\left\{
			     	\begin{array}{rlc}
					  -\displaystyle \operatorname{div}\left( { A_{\ep}\nabla \boldsymbol{u_\ep}}\right)   + \nabla p_{\ep}
					  & ={\boldsymbol{\t}_{\bs\ep} } & \text{in }{\mathcal{O}}_{\ep}^*,\\[2mm]
					 \operatorname{div}({\boldsymbol{u_\ep}}) & =0 &
					 \text{in } {\mathcal{O}}_{\ep}^*,\\[2mm]
					 \d \bs{\eta_{\ep}} \cdot A_{\ep}\nabla \boldsymbol{u_\ep}  -{p}_{\ep} \bs{\eta_{\ep}} & =  \boldsymbol{0}  & \text{on } \Gamma^{\ep}_1,\\
					  \boldsymbol{u_\ep} & = \boldsymbol{0} & \text{on }  \Gamma^{\ep}_0,
				\end{array}
				\right.
\end{equation}
where the desired state $\u_{\bs d}=(u_{d_1}, \dots, u_{d_n})$ is defined on the space $(L^{2}({\mathcal{O}}))^n$, $\boldsymbol{\t}_{\bs\ep} $ is a control function defined on the space $\left( L^2({\mathcal{O}}_{\ep}^*)\right)^n$    and $\tau >0$ is a given regularization parameter. 
Here, the matrix $A_{\ep}(x) = A (\frac{x}{\ep})$, where $A (x) =\left( a_{ij}(x)\right)_{1\le i,j\le n}$ defined on the space $\left(  L^{\infty}({\mathcal{O}}) \right)^{n \times n} $ is assumed  to obey the uniform ellipticity condition: there exist real constants $m_1,\,m_2>0$ such that $  m_1 ||{\lambda}||^2 \le  \sum_{i, j = 1}^{n} a_{ij}(x)\lambda_{i} \lambda_{j} \le m_2 ||{\lambda}||^2 $ for all $\lambda \in \mathbb{R}^n$, which is endowed with an Eucledian norm denoted by $||\cdot||$.  Also, we understand the action of scalar boundary operator   $\bs{\eta_{\ep}} \cdot A_{\ep}\nabla $ on the vector $\boldsymbol{u}_{\bs\ep}|_{\Gamma^{\ep}_1}$ in a ''diagonal'' manner:  $\left(  \bs{\eta_{\ep}} \cdot A_{\ep}\nabla \boldsymbol{u}_{\bs\ep} \right)_i = \bs{\eta_{\ep}} \cdot A_{\ep}\nabla {u}_{\ep i}$,  for $1\le i \le n$.	\\[1mm]
We introduce the function space $ (H^1_{\g}({\mathcal{O}}_{\ep}^*))^n := \{{\bs \phi} \in (H^1({\mathcal{O}}_{\ep}^*))^n\  | \   \bs{\phi}|_{\g} = \bs0\}$. This is a Banach space endowed with the norm
\begin{equation*}
				|| \bs{\phi}||_{(H^1_{\g}({\mathcal{O}}_{\ep}^*))^n}  := 	|| \nabla{\bs{\phi}}||_{(L^2 ({\mathcal{O}}_{\ep}^*))^{n \times n}}, \quad \forall \bs{\phi} \in (H^1_{\g}({\mathcal{O}}_{\ep}^*))^n.
\end{equation*}
{\bf{Definition 2.1.}}
We say a pair $(\u_{\bs\ep}, p_\ep) \in (H^1_{\g}({\mathcal{O}}_{\ep}^*))^n \times L^{2}({\mathcal{O}}_{\ep}^*) $ is a weak solution to \eqref{2e2} if, for all $\bs{\phi} \in (H^1_{\g}({\mathcal{O}}_{\ep}^*))^n$, 
			
\begin{equation}\label{2e3}
				\int_{{\mathcal{O}}_\ep^*} A_{\ep} \nabla{\bs {u}_{\bs\ep}} \colon \nabla{\bs \phi} \, dx - \int_{{\mathcal{O}}_\ep^*} p_\ep \operatorname{div}(\bs{\phi})\, dx = \int_{{\mathcal{O}}^*_\ep}  \boldsymbol {\theta}_{\bs\ep} \cdot \bs{\phi}\, dx,
\end{equation}
and, for all $w \in L^{2}({\mathcal{O}}_{\ep}^*),$ \\
\begin{equation}\label{2e4}
				\int_{{\mathcal{O}}_\ep^*} \operatorname{div}(\bs{u_\ep})\ w\ dx = 0.
\end{equation} 

Here, $(\colon)$ and $(\cdot)$ represent the summation of the component-wise multiplication of the matrix entries and the usual scalar product of vectors, respectively. The existence of a unique weak solution	$(\u_{\bs\ep}({\boldsymbol{\t}_{\bs\ep}}),p_{\ep}) \in  (H^1_{\g}({\mathcal{O}}_{\ep}^*))^n\times L^2({\mathcal{O}}_{\ep}^*) $  of the system \eqref{2e2} follows analogous to \cite[Theorem \Romannum{4}.7.1]{FP2013}. Also, for each $ \ep > 0$, there exists a unique solution to the problem \eqref{OCPE} that can be proved along the same lines as in \cite[Chapter 2, Theorem 1.2]{Lions1971}. We call the optimal solution to \eqref{OCPE} by  the triplet $(\bs{\overline{u}_{\bs\ep}},\ov{p}_{\ep}, \ov{\bs{\t}}_{\bs\ep} ) $, with $\bs{\overline{u}_{\bs\ep}}$, $\ov{p}_{\ep}$, and $\ov{\bs{\t}}_{\bs\ep}$ as optimal state, pressure, and control, respectively.\\
			
\noindent \textbf{Optimality Condition}: The optimality condition is given by $  J^{\prime}_{\ep}(\t)\cdot (\t - \ov {\t}_{\bs\ep}) \ge 0$, for all ${\bs{\t}} \in (L^2({\mathcal{O}}_{\varepsilon}^*))^n$ (see,  \cite[Chapter 2, Page 48]{Lions1971}). One can obtain the further simplification of this condition as  $\int_{{\mathcal{O}}^*_\ep} (\ov{\bs{v}}_{\bs\ep} + \tau\, \ov\t_{\bs\ep} ) \cdot(\t - \ov {\t}_{\bs\ep}) \ge 0$,  for all ${\bs{\t}} \in (L^2({\mathcal{O}}_{\varepsilon}^*))^n$ (see,  \cite[Chapter 2]{Lions1971}), where the pair $(\ov{\bs{v}}_{\ep}, \ov{q}_{\ep})$ is the solution to the following adjoint problem:
			
\begin{equation}\label{4e1}
				\left\{
				\begin{array}{rlc}
					-\displaystyle \operatorname{div}\left( { A^t_{\ep}\nabla \boldsymbol{\ov v_\ep}}\right)  + \nabla \ov{q}_{\ep}
					& ={\boldsymbol{\ov u}_{\bs\ep} - \bs{u_d}} & \text{in }{\mathcal{O}}_{\ep}^*,\\[2mm]
					\operatorname{div}({\boldsymbol{\ov v_\ep}}) & =0 &
					\text{in } {\mathcal{O}}_{\ep}^*,\\[2mm]
					\d\bs{\eta_{\ep}} \cdot {A^t_{\ep}}\nabla \boldsymbol{\ov v_\ep}  -{\ov q}_{\ep} \bs{\eta}_{\bs\ep} & = \boldsymbol{0}  & \text{on } \Gamma^{\ep}_1,\\
					\boldsymbol{\ov v_\ep} & = \boldsymbol{0} & \text{on }  \Gamma^{\ep}_0.
				\end{array}
				\right.
\end{equation}
We call $\boldsymbol{\ov v_\ep}$ and ${\ov q}_{\ep}$, the adjoint state and pressure, respectively. The existence of unique weak solution $(\boldsymbol{\ov v_\ep}, {\ov q}_{\ep})$ to \eqref{4e1} can now be proved in a way  similar to that of system \eqref{2e2}.\\[2mm]
The following theorem characterizes the optimal control, the proof of which follows analogous to standard procedure laid in \cite[Chapter 2, Theorem 1.4]{Lions1971}.
\begin{thm}\label{4t1}
Let $\left(\overline{\bs{u}}_{\bs\ep},\overline{{p}}_{\ep}, \overline{\boldsymbol{\t}}_{\bs\ep}\right) $ be the optimal solution of the problem \eqref{OCPE} and $(\bs{\overline{v}_{\bs\ep}}, \ov{q}_{\ep})$ solves
\eqref{4e1}, then the optimal control  is characterized by\\
\begin{equation}\label{4e2}
					\overline{\boldsymbol{\t}}_{\bs\ep}  = -\frac{1}{\tau} \bs{\overline{v}}_{\bs\ep}\text {  a.e. in } {\mathcal{O}}_{\ep}^*.
\end{equation}
Conversely, suppose that a triplet $ (\boldsymbol {\check{u}}_{\bs\ep}, \check{p}_{\ep}, \boldsymbol{\check{\t}}_{\bs\ep})
				\in \left( H^1_{\g}({\mathcal{O}}_{\ep}^*)\right) ^n \times  L^2({\mathcal{O}}_{\ep}^*) \times  \left( L^2({{\mathcal{O}}_{\ep}^*})\right) ^n$ and a pair $ (\boldsymbol {\check{v}}_{\bs\ep}, \check{q}_{\ep}) \in \left( H^1_{\g}({\mathcal{O}}_{\ep}^*)\right) ^n \times L^2({\mathcal{O}}_{\ep}^*)$ solves the following system:
\begin{equation*}\label{4e3}
					\left\{
					\begin{array}{rlc}
						&-\displaystyle \operatorname{div}\left( { A_{\ep}\nabla \boldsymbol{\check u_\ep}}\right)  + \nabla \check p_{\ep}
						= -\frac{1}{\tau} \bs{\check{v}_{\ep}} & \text{in }{\mathcal{O}}_{\ep}^*,\\[2mm]
						&-\displaystyle \operatorname{div}\left( { A^t_{\ep}\nabla \boldsymbol{\check v_\ep}}\right)  + \nabla \check{q}_{\ep}
						={\boldsymbol{\check u_{\ep}} - \bs{u_d}} & \text{in }{\mathcal{O}}_{\ep}^*,\\[2mm]
						&\d \operatorname{div}({\boldsymbol{\check{u}_\ep}}) =0 ,\, 	\operatorname{div}({\boldsymbol{\check{v}_\ep}})=0 &	\text{in } {\mathcal{O}}_{\ep}^*,\\[2mm]
						&\d \bs{\eta_{\ep}} \cdot A_{\ep}\nabla \boldsymbol{\check u_\ep}  -\check{p}_{\ep} \bs{\eta_{\ep}}  = \boldsymbol{0}  & \text{on } \Gamma^{\ep}_1,\\[2mm]
						&\d \bs{\eta_{\ep}} \cdot {A^t_{\ep}}\nabla \boldsymbol{\check v_\ep} -{\check q}_{\ep} \bs{\eta_{\ep}}  = \boldsymbol{0}  & \text{on } \Gamma^{\ep}_1,\\
						&\boldsymbol{\check{v}_\ep}  = \boldsymbol{0},\,
						\boldsymbol{\check{u}_\ep} = \boldsymbol{0} &\text{on } \g.
					\end{array}
					\right.
\end{equation*}
Then the triplet $(\check{\bs{u}}_{\bs\ep},\check{p}_\ep,  -\frac{1}{\tau} \bs{\check{v}}_{\bs\ep})$\ is the optimal solution of \eqref{OCPE}.
\end{thm}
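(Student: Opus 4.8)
\noindent\emph{Proof strategy.} The plan is to run the classical Lions argument (as in \cite[Chapter 2]{Lions1971}), being careful with the divergence constraint and with the oscillating operator acting diagonally on components. First I would compute the derivative of the cost. Since the only datum of the state system \eqref{2e2} is the control, the control-to-state map $\t\mapsto\u_{\bs\ep}(\t)$ is linear; hence for any $\t\in(L^2(\mathcal{O}_\ep^*))^n$ the function $\bs z_\ep:=\u_{\bs\ep}(\t)-\ov{\u}_{\bs\ep}=\u_{\bs\ep}(\t-\ov{\t}_{\bs\ep})$ together with some pressure $\pi_\ep$ solves \eqref{2e3}--\eqref{2e4} with right-hand side $\t-\ov{\t}_{\bs\ep}$, and expanding $J_\ep$ gives
\begin{equation*}
J_\ep'(\ov{\t}_{\bs\ep})\cdot(\t-\ov{\t}_{\bs\ep})=\int_{\mathcal{O}_\ep^*}(\ov{\u}_{\bs\ep}-\bs{u_d})\cdot\bs z_\ep\,dx+\tau\int_{\mathcal{O}_\ep^*}\ov{\t}_{\bs\ep}\cdot(\t-\ov{\t}_{\bs\ep})\,dx.
\end{equation*}

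The heart of the argument is the adjoint manipulation. I would test the weak form of \eqref{4e1} against $\bs\phi=\bs z_\ep$, which is admissible because $\bs z_\ep$ vanishes on $\Gamma_0^\ep$ and is divergence-free (so its pressure-coupling term drops), obtaining $\int_{\mathcal{O}_\ep^*}A^t_\ep\nabla\ov{\v}_{\bs\ep}\colon\nabla\bs z_\ep\,dx=\int_{\mathcal{O}_\ep^*}(\ov{\u}_{\bs\ep}-\bs{u_d})\cdot\bs z_\ep\,dx$. Using the componentwise identity $\int A^t_\ep\nabla a\colon\nabla b=\int A_\ep\nabla b\colon\nabla a$ and then testing the weak form of the state system for $\bs z_\ep$ against $\bs\phi=\ov{\v}_{\bs\ep}$ (again admissible and divergence-free, so the $\pi_\ep$-term drops) yields $\int_{\mathcal{O}_\ep^*}A_\ep\nabla\bs z_\ep\colon\nabla\ov{\v}_{\bs\ep}\,dx=\int_{\mathcal{O}_\ep^*}(\t-\ov{\t}_{\bs\ep})\cdot\ov{\v}_{\bs\ep}\,dx$. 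Chaining the two identities gives $\int_{\mathcal{O}_\ep^*}(\ov{\u}_{\bs\ep}-\bs{u_d})\cdot\bs z_\ep\,dx=\int_{\mathcal{O}_\ep^*}(\t-\ov{\t}_{\bs\ep})\cdot\ov{\v}_{\bs\ep}\,dx$, so that
\begin{equation*}
J_\ep'(\ov{\t}_{\bs\ep})\cdot(\t-\ov{\t}_{\bs\ep})=\int_{\mathcal{O}_\ep^*}(\ov{\v}_{\bs\ep}+\tau\,\ov{\t}_{\bs\ep})\cdot(\t-\ov{\t}_{\bs\ep})\,dx,
\end{equation*}
which is the simplified optimality inequality recorded in the excerpt. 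Since the admissible set is all of $(L^2(\mathcal{O}_\ep^*))^n$, taking $\t=\ov{\t}_{\bs\ep}\pm\bs\psi$ for arbitrary $\bs\psi$ turns the inequality $J_\ep'(\ov{\t}_{\bs\ep})\cdot(\t-\ov{\t}_{\bs\ep})\ge0$ into $\int_{\mathcal{O}_\ep^*}(\ov{\v}_{\bs\ep}+\tau\,\ov{\t}_{\bs\ep})\cdot\bs\psi\,dx=0$ for all $\bs\psi$, hence $\ov{\v}_{\bs\ep}+\tau\,\ov{\t}_{\bs\ep}=\bs0$ a.e., which is \eqref{4e2}.

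For the converse I would invoke convexity: the control-to-state map being affine-linear, $J_\ep$ is convex, and strictly so because of the $\tfrac\tau2\|\t\|^2$ term, so the variational inequality $\int_{\mathcal{O}_\ep^*}(\check{\v}_{\bs\ep}+\tau\,\check{\t}_{\bs\ep})\cdot(\t-\check{\t}_{\bs\ep})\,dx\ge0$ for all $\t$ is \emph{sufficient} for $\check{\t}_{\bs\ep}$ to minimize \eqref{OCPE}. In the coupled system $\check{\v}_{\bs\ep}$ is exactly the adjoint state attached to $\check{\u}_{\bs\ep}$ (its equation has right-hand side $\check{\u}_{\bs\ep}-\bs{u_d}$, it is divergence-free, and it vanishes on $\g$), and the first equation says $\check{\u}_{\bs\ep}=\u_{\bs\ep}(-\tfrac1\tau\check{\v}_{\bs\ep})$, so the computation above applies verbatim with checks in place of bars and gives $J_\ep'(\check{\t}_{\bs\ep})\cdot(\t-\check{\t}_{\bs\ep})=\int_{\mathcal{O}_\ep^*}(\check{\v}_{\bs\ep}+\tau\,\check{\t}_{\bs\ep})\cdot(\t-\check{\t}_{\bs\ep})\,dx$; but $\check{\t}_{\bs\ep}=-\tfrac1\tau\check{\v}_{\bs\ep}$ forces the integrand to vanish identically, so the inequality holds trivially and $(\check{\u}_{\bs\ep},\check p_\ep,-\tfrac1\tau\check{\v}_{\bs\ep})$ is the (unique) optimal solution. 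Equivalently one may argue directly: for an arbitrary admissible $\t$ with state $\u_{\bs\ep}(\t)$ and $\bs w_\ep:=\u_{\bs\ep}(\t)-\check{\u}_{\bs\ep}=\u_{\bs\ep}(\t-\check{\t}_{\bs\ep})$, expand
\begin{equation*}
J_\ep(\t)-J_\ep(\check{\t}_{\bs\ep})=\tfrac12\|\bs w_\ep\|^2_{(L^2(\mathcal{O}_\ep^*))^n}+\tfrac\tau2\|\t-\check{\t}_{\bs\ep}\|^2_{(L^2(\mathcal{O}_\ep^*))^n}+\int_{\mathcal{O}_\ep^*}(\check{\u}_{\bs\ep}-\bs{u_d})\cdot\bs w_\ep\,dx+\tau\int_{\mathcal{O}_\ep^*}\check{\t}_{\bs\ep}\cdot(\t-\check{\t}_{\bs\ep})\,dx,
\end{equation*}
and observe that the adjoint identity makes the last two terms equal to $\int_{\mathcal{O}_\ep^*}(\check{\v}_{\bs\ep}+\tau\,\check{\t}_{\bs\ep})\cdot(\t-\check{\t}_{\bs\ep})\,dx=0$, leaving a nonnegative quantity that vanishes only at $\t=\check{\t}_{\bs\ep}$.

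The one step needing genuine care is the adjoint manipulation: one must verify that $\bs z_\ep$ and $\ov{\v}_{\bs\ep}$ (resp. $\check{\v}_{\bs\ep}$) are admissible test functions in each other's weak formulations — the vanishing trace on $\Gamma_0^\ep$ and the divergence constraint are precisely what eliminate both pressure terms, while the Neumann condition on $\Gamma_1^\ep$ is already built into \eqref{2e3}/\eqref{4e1} — and that the transpose identity $\int A^t_\ep\nabla a\colon\nabla b=\int A_\ep\nabla b\colon\nabla a$ is applied componentwise for the diagonally-acting operator. Everything else is routine Lions bookkeeping, and the strict convexity furnished by $\tau>0$ makes the converse immediate once the optimality identity is in hand.
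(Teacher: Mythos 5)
Your argument is correct and is exactly the standard Lions procedure that the paper itself invokes without writing out (it only cites \cite[Chapter 2, Theorem 1.4]{Lions1971}): the adjoint duality identity with the pressure terms eliminated by the divergence-free test functions, the unconstrained admissible set turning the variational inequality into the identity $\ov{\bs v}_{\bs\ep}+\tau\,\ov{\bs\t}_{\bs\ep}=\bs 0$, and strict convexity for the converse. No discrepancy with the paper's intended proof.
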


\section{A priori estimates}\label{Section 5}
This section concerns  the derivation of estimates for the optimal solution to the problem \eqref{OCPE} and the associated solution to the adjoint problem \eqref{4e1}. These estimates are uniform and independent of the parameter $\ep$. Towards attaining this aim, we first evoke the following two lemmas:
\begin{lem}[Lemma A.4, {\cite {AM1993}}]\label{4l2} There exists a constant $C \in \mathbb{R}^+$, independent of $\ep$,  such that
\begin{equation*}
		||{\bs v}||_{ L^2({\mathcal{O}}_{\ep}^*)^n} \le C ||\nabla {\bs v}||_{\left( L^2({\mathcal{O}}_{\ep}^*)\right) ^{n\times n}}, \quad \forall\ {\bs v} \in \He1.
\end{equation*}
\end{lem}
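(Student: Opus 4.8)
The plan is to establish this uniform Poincaré-type inequality by the standard argument of transporting the classical Poincaré inequality from the reference cell to each $\varepsilon$-cell via rescaling, then summing over the cells and controlling the boundary strip $\hat\Lambda_\varepsilon^*$ separately. The key structural fact being exploited is that every function in $\He1$ vanishes on $\Gamma_0^\varepsilon = \partial\mathcal{O}_\ep^* \setminus \Gamma_1^\varepsilon$, which in particular contains the outer boundary $\partial\mathcal{O}$ as well as the boundaries of all holes that touch $\partial\mathcal{O}$; the only part of $\partial\mathcal{O}_\ep^*$ where $\bs v$ need not vanish is $\Gamma_1^\varepsilon$, the union of the interior holes inside $\widehat{\mathcal{O}}_\ep$.

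First I would recall the reference-cell estimate: since $W^*$ is a connected (or finitely-many-component) Lipschitz domain and the trace onto the outer boundary $\partial W \cap \partial W^*$ is meaningful, there is a constant $C_0$, depending only on $W^*$, such that $\|\psi\|_{L^2(W^*)} \le C_0 \|\nabla\psi\|_{L^2(W^*)}$ for every scalar $\psi \in H^1(W^*)$ that vanishes on $\partial W$. (If components of $W^*$ do not meet $\partial W$ one must be slightly more careful, but the hypothesis that $W^*$ arises from a compact $Y\subset\overline W$ with the geometry in Section~\ref{sec2} is what makes this work; alternatively one uses the Poincaré–Wirtinger inequality plus the vanishing trace.) Then for a cell $\varepsilon(\zeta+W^*)$ with $\zeta\in\mathcal{Z}_\ep$, the change of variables $x = \varepsilon(\zeta + y)$ turns this into $\|\psi\|_{L^2(\varepsilon(\zeta+W^*))} \le C_0\,\varepsilon\,\|\nabla\psi\|_{L^2(\varepsilon(\zeta+W^*))}$, with the same $C_0$ because the $\varepsilon$-factors from the volume element cancel against those from the gradient, leaving one clean power of $\varepsilon$ (which only helps). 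Summing these inequalities over all $\zeta\in\mathcal{Z}_\ep$ gives control of $\|\bs v\|_{L^2(\widehat{\mathcal{O}}_\ep^*)}$ by $\varepsilon\|\nabla\bs v\|_{L^2(\widehat{\mathcal{O}}_\ep^*)} \le \varepsilon \|\nabla\bs v\|_{L^2(\mathcal{O}_\ep^*)}$, componentwise and hence for the vector $\bs v$.

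Next I would handle the boundary layer $\hat\Lambda_\varepsilon^* = \mathcal{O}_\ep^* \setminus \widehat{\mathcal{O}}_\ep^*$. Here $\bs v$ vanishes on a nontrivial portion of the boundary of each constituent cell (the part lying on $\partial\mathcal{O}$, or on holes meeting $\partial\mathcal{O}$, all of which sit in $\Gamma_0^\varepsilon$), so on each such partial cell one again has a Poincaré inequality with an $\varepsilon$-scaled constant; summing over the $O(\varepsilon^{-(n-1)})$ such cells yields $\|\bs v\|_{L^2(\hat\Lambda_\varepsilon^*)} \le C\varepsilon \|\nabla\bs v\|_{L^2(\mathcal{O}_\ep^*)}$. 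An alternative, cleaner route for the whole domain at once is to invoke the global Poincaré inequality on $\mathcal{O}$ applied to the zero-extension $\widetilde{\bs v}$: since $\bs v$ vanishes on $\Gamma_0^\varepsilon \supset \partial\mathcal{O}$, one checks that $\widetilde{\bs v} \in (H^1_0(\mathcal{O}))^n$ with $\nabla\widetilde{\bs v} = \widetilde{\nabla\bs v}$ (the extension-by-zero commutes with the gradient precisely because there is no jump across $\Gamma_0^\varepsilon$), whence $\|\bs v\|_{L^2(\mathcal{O}_\ep^*)} = \|\widetilde{\bs v}\|_{L^2(\mathcal{O})} \le C_{\mathcal{O}}\|\nabla\widetilde{\bs v}\|_{L^2(\mathcal{O})} = C_{\mathcal{O}}\|\nabla\bs v\|_{L^2(\mathcal{O}_\ep^*)}$, with $C_{\mathcal{O}}$ depending only on $\mathcal{O}$ and therefore independent of $\varepsilon$. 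This second argument is short and I would present it as the main proof, keeping the cell-by-cell rescaling in reserve.

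The main obstacle is the subtle point in justifying that $\widetilde{\bs v}\in (H^1_0(\mathcal{O}))^n$ with gradient equal to the extension of the gradient: one must verify there is no distributional contribution supported on the interface — and this is exactly where the boundary condition $\bs v|_{\Gamma_0^\varepsilon} = \bm0$ is essential, since the interface between $\mathcal{O}_\ep^*$ and the holes splits into $\Gamma_1^\varepsilon$ (interior holes, where $\bs v$ may be nonzero but which does \emph{not} lie on $\partial\mathcal{O}$, so the extension there does create a jump) and the holes meeting $\partial\mathcal{O}$ (part of $\Gamma_0^\varepsilon$, where $\bs v=\bm0$ so no jump). Consequently the naive global-extension argument is \emph{not} quite valid across $\Gamma_1^\varepsilon$, and one genuinely does need the cell-by-cell argument of the previous paragraph — i.e., the real proof is the rescaling-and-summation over $\mathcal{Z}_\ep$ together with the boundary-strip estimate — while the bookkeeping of which cells contribute is the only delicate part; everything else is routine. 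I would therefore structure the write-up around the rescaled reference-cell Poincaré inequality as indicated, citing \cite{AM1993} for the precise constant.
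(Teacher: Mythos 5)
The paper offers no proof of this lemma: it is imported verbatim from \cite{AM1993} (Lemma A.4 there), so the comparison is with the standard argument in the literature. Your write-up contains a genuine gap, and it is in the part you ultimately designate as ``the real proof.'' On an interior cell $\ep(\zeta+W^*)$ with $\zeta\in\mathcal{Z}_{\ep}$, a function $\bs v\in\He1$ satisfies \emph{no} boundary condition whatsoever: $\Gamma_0^{\ep}$ consists only of the outer boundary $\p{\mathcal{O}}$ together with the boundaries of holes meeting $\p{\mathcal{O}}$, all of which lie in the boundary layer $\hat{\Lambda}_{\ep}^*$. Hence your reference-cell inequality ``$\|\psi\|_{L^2(W^*)}\le C_0\|\nabla\psi\|_{L^2(W^*)}$ for $\psi$ vanishing on $\p W$'' simply does not apply to $\bs v$ restricted to an interior cell, and the rescale-and-sum step collapses. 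The conclusion it would deliver, $\|\bs v\|_{(L^2(\mathcal{O}_{\ep}^*))^n}\le C\ep\,\|\nabla\bs v\|_{(L^2(\mathcal{O}_{\ep}^*))^{n\times n}}$, is in fact false: take any fixed nonzero $\bs\varphi\in(\mathcal{D}(\mathcal{O}))^n$ and restrict it to $\mathcal{O}_{\ep}^*$; both norms stay of order one as $\ep\to0$. An $O(\ep)$ Poincar\'e constant is the signature of Dirichlet conditions on the holes (the Darcy regime), whereas here the holes in $\widehat{\mathcal{O}}_{\ep}$ carry Neumann conditions and the correct constant is $O(1)$. You correctly diagnose that the zero-extension shortcut fails across $\Gamma_1^{\ep}$ (extension by zero into the interior holes creates a jump, so $\widetilde{\bs v}\notin(H^1_0(\mathcal{O}))^n$ in general), but you then fall back on the cell argument, which is equally unavailable.

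The missing idea is an $\ep$-uniform \emph{extension operator}: under the stated geometric hypotheses on $W^*$ one constructs $P_{\ep}:H^1(\mathcal{O}_{\ep}^*)\to H^1(\mathcal{O})$ with $P_{\ep}v=v$ on $\mathcal{O}_{\ep}^*$ and $\|\nabla P_{\ep}v\|_{(L^2(\mathcal{O}))^n}\le C\|\nabla v\|_{(L^2(\mathcal{O}_{\ep}^*))^n}$, $C$ independent of $\ep$ (the Cioranescu--Saint Jean Paulin construction, done cell by cell on the reference hole and rescaled; the holes meeting $\p\mathcal{O}$ are handled using the vanishing of $v$ on their boundaries). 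One then applies the ordinary Poincar\'e inequality on the fixed domain $\mathcal{O}$ to $P_{\ep}v$, which vanishes on $\p\mathcal{O}$, and restricts back. An alternative that avoids extension is a Poincar\'e--Wirtinger inequality on each cell (controlling $v-\mathcal{M}_{W^*}(v)$ by $\ep\|\nabla v\|$, which needs no boundary condition) combined with a discrete Poincar\'e inequality for the array of cell averages, anchored at the cells touching $\p\mathcal{O}$ where $v$ vanishes; the chaining of neighboring averages is where connectivity of the perforated structure enters. Either route is the actual content of \cite[Lemma A.4]{AM1993}; without one of them your proposal does not close.
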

\begin{lem}[Lemma 5.1, {\cite {CC1985}}]\label{4s3} For each $\ep>0$ and $q_{\ep} \in L^2({\mathcal{O}}_{\ep}^*)$, there exists  $\bs{g}_{\bs\ep} \in \He1 $ and a constant $C  \in \mathbb{R}^+$, independent of $\ep$, such that
\begin{equation} \label{5.1}
		\operatorname{div}(\bs{g}_{\bs\ep})= q_{\ep}\ \ \text{and}\ \ ||\nabla {\bs g}_{\bs\ep}||_{\left( L^2({\mathcal{O}}_{\ep}^*)\right)^{n\times n}} \le C({\mathcal{O}})\ ||q_{\ep}||_{L^2({\mathcal{O}}_{\ep}^*)}.	
\end{equation}
\end{lem}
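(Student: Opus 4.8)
The plan is to read \eqref{5.1} as the assertion that the divergence operator $\operatorname{div}\colon \He1 \to L^2(\mathcal{O}_\ep^*)$ admits a bounded right inverse whose norm is controlled independently of $\ep$; equivalently, that a Babu\v{s}ka--Brezzi (inf--sup) condition holds uniformly in $\ep$. The decisive structural feature is that the admissible fields must vanish only on $\g=\Gamma_0^\ep$ and are left \emph{free} on the interior hole boundaries $\Gamma_1^\ep$. Hence $\int_{\mathcal{O}_\ep^*}\operatorname{div}(\bs g_{\bs\ep}) = \int_{\Gamma_1^\ep}\bs g_{\bs\ep}\cdot\bs\eta_{\bs\ep}$ may take any value, so the usual zero-mean compatibility obstruction disappears: the residual mean of $q_\ep$ can be pushed out through the interior holes. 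Uniformity of the constant will then come from a scaling argument on the reference cell.

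I would first isolate two fixed-domain ingredients, each with a constant that does not see $\ep$. First, on the reference perforated cell $W^*=W\setminus Y$ the mixed-boundary divergence problem (the field vanishing on $\partial W$, free on $\partial Y$) has a bounded linear solution operator $\bs L_0\colon L^2(W^*)\to (H^1_{\partial W}(W^*))^n$ with $\operatorname{div}(\bs L_0 f)=f$ and $\|\nabla \bs L_0 f\|_{L^2(W^*)}\le C_0\,\|f\|_{L^2(W^*)}$; crucially no compatibility condition on $f$ is required, precisely because the flux $\int_{W^*}f$ escapes through $\partial Y$. This is a Bogovskii/Ne\v{c}as construction on the \emph{fixed} Lipschitz cell, so $C_0$ depends only on $W,Y$. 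Second, on the fixed domain $\mathcal{O}$ the classical Bogovskii operator solves $\operatorname{div}(\bs B g)=g$ for zero-mean $g$ with $\bs B g\in (H^1_0(\mathcal{O}))^n$ and $\|\nabla(\bs B g)\|_{L^2(\mathcal{O})}\le C(\mathcal{O})\|g\|_{L^2(\mathcal{O})}$.

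The assembly proceeds cell by cell. Split $\mathcal{O}_\ep^*=\widehat{\mathcal{O}}_\ep^*\cup\hat{\Lambda}_\ep^*$. On each interior cell $\ep(\zeta+W^*)$, $\zeta\in\mathcal{Z}_\ep$, set $\bs g_{\bs\ep}(x)=\big(\bs L_0 \hat q_\zeta\big)\!\big((x-\ep\zeta)/\ep\big)$, where $\hat q_\zeta(y)=\ep\,q_\ep(\ep\zeta+\ep y)$ is the rescaled datum. A change of variables gives the two key identities
\begin{equation*}
\operatorname{div}_x \bs g_{\bs\ep}=q_\ep \ \text{ on } \ep(\zeta+W^*),\qquad
\|\nabla \bs g_{\bs\ep}\|_{L^2(\ep(\zeta+W^*))}\le C_0\,\|q_\ep\|_{L^2(\ep(\zeta+W^*))},
\end{equation*}
in which the powers of $\ep$ cancel exactly --- this cancellation is why $C_0$ is inherited with no $\ep$-loss. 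Since each local field vanishes on $\ep(\zeta+\partial W)$, the pieces glue across cell interfaces into an $H^1$ field on $\widehat{\mathcal{O}}_\ep^*$ vanishing on the interface $\partial\widehat{\mathcal{O}}_\ep$ and free on $\Gamma_1^\ep$; summing the squared estimates over the $O(\ep^{-n})$ cells yields $\|\nabla\bs g_{\bs\ep}\|_{L^2(\widehat{\mathcal{O}}_\ep^*)}\le C_0\|q_\ep\|_{L^2(\mathcal{O}_\ep^*)}$.

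The main obstacle is the thin layer $\hat{\Lambda}_\ep^*$, where the cells are truncated by $\partial\mathcal{O}$ and the layer holes belong to the Dirichlet part $\Gamma_0^\ep$, so the reference-cell recipe does not apply verbatim and a naive Dirichlet solve is obstructed: with the field forced to vanish on all of $\partial\hat{\Lambda}_\ep^*$ its divergence integrates to zero, whereas $\int_{\hat{\Lambda}_\ep^*}q_\ep$ need not vanish. I would resolve this by routing the residual flux rather than trapping it: apply $\bs B$ to $\widetilde{q}_\ep-\mathcal{M}_{\mathcal{O}}(\widetilde{q}_\ep)$, with $\widetilde{q}_\ep$ the zero extension of $q_\ep$, to absorb the non-zero-mean part with the $\ep$-uniform constant $C(\mathcal{O})$, and use the interior correctors to carry the constant $\mathcal{M}_{\mathcal{O}}(\widetilde{q}_\ep)$ out through $\Gamma_1^\ep$ at uniform cost; on the layer itself $|\hat{\Lambda}_\ep^*|\to 0$, and rescaling by the layer width $\sim\ep$ keeps the estimate $\ep$-uniform. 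As every step is linear in $q_\ep$ and each constant depends only on $\mathcal{O}$, $W$, $Y$, the resulting $\bs g_{\bs\ep}$ lies in $\He1$, satisfies $\operatorname{div}(\bs g_{\bs\ep})=q_\ep$, and obeys \eqref{5.1} with $C=C(\mathcal{O})$ independent of $\ep$.
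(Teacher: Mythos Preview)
The paper does not prove this lemma; it is quoted from \cite{CC1985} and used as a black box in the proof of Theorem~\ref{4t4}. There is therefore no in-paper argument to compare against, and I assess your sketch on its own merits.

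Your interior-cell construction is correct and is the heart of the matter: the reference-cell operator $\bs L_0$ exists precisely because the field is free on $\partial Y$, so no mean-zero constraint is needed; the scaling computation is right (the $\ep$-powers cancel exactly in the gradient estimate); and the pieces glue in $H^1$ across cell faces because each local field vanishes on $\ep(\zeta+\partial W)$. Summing over cells gives the uniform bound on $\widehat{\mathcal{O}}_\ep^*$.

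The boundary-layer step, however, is not yet a proof. Restricting the global Bogovski\u{\i} field $\bs B(\widetilde{q}_\ep-\mathcal{M}_{\mathcal{O}}(\widetilde{q}_\ep))\in (H^1_0(\mathcal{O}))^n$ to $\mathcal{O}_\ep^*$ does \emph{not} yield an element of $\He1$: it vanishes on $\partial\mathcal{O}$ but has no reason to vanish on the boundaries of the holes lying in $\hat{\Lambda}_\ep^*$, and those boundaries belong to $\Gamma_0^\ep$, not to $\Gamma_1^\ep$. The sentence ``rescaling by the layer width $\sim\ep$ keeps the estimate $\ep$-uniform'' does not repair this boundary mismatch. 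A genuine argument must either (i) build the layer field directly on $\hat{\Lambda}_\ep^*$ with zero trace on $\partial\mathcal{O}$ \emph{and} on the layer holes, letting the flux $\int_{\hat{\Lambda}_\ep^*}q_\ep$ escape through the interface $\partial\widehat{\mathcal{O}}_\ep\cap\mathcal{O}$, and then absorb that interface flux by an additional interior-cell correction; or (ii) invoke a uniform restriction/extension operator for the perforated geometry (as in Cioranescu--Saint Jean Paulin or Acerbi--Chiad\`o Piat--Dal Maso--Percivale) to transfer the problem to the fixed domain $\mathcal{O}$. Either route requires real work because the layer cells are truncated by $\partial\mathcal{O}$ and are not $\ep$-rescalings of a single reference set, so a per-cell constant is not automatic there.
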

\begin{thm}\label{4t4}
For each $\ep > 0,$ let $\left(\overline{\bs{u}}_{\bs\ep},\overline{{p}}_{\ep}, \overline{\boldsymbol{\t}}_{\bs\ep}\right) $ be the optimal solution of the problem \eqref{OCPE} and $(\bs{\overline{v}_{\ep}}, \overline{q}_{\ep} )$ solves the corresponding adjoint problem $(\ref{4e1})$. Then, one has $\ov{\bs{\t}}_{\bs\ep} \in (H^1_{\g}({\mathcal{O}}_{\ep}^*))^n $ and there exists a constant $C  \in \mathbb{R}^+$, independent of $\ep$ such that 
\begin{align}
		&\left\|\bar{\bs \theta}_{\bs\varepsilon}\right\|_{\left(L^{2}\left({\mathcal{O}}_{\varepsilon}^{*}\right)\right)^n} \leq C, \label{5.2}\\
		&\left\|\bar{\bs u}_{\bs\varepsilon}\right\|_{\He1} \leq C,\label{5.3} \\
		&\left\|\bar{\bs v}_{\bs \varepsilon}\right\|_{\He1} \leq C,\label{5.4} \\
		&\left\|\bar{p}_{\varepsilon}\right\|_{L^2({\mathcal{O}}_{\ep}^*)} \leq C,\label{5.5}\\
		&\left\|\bar{q}_{\varepsilon}\right\|_{L^2({\mathcal{O}}_{\ep}^*)} \leq C.\label{5.6}
\end{align}
\end{thm}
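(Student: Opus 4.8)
The plan is to run a bootstrap argument using the three quantities involved — the cost functional, the state equation, and the adjoint equation — together with the coercivity (Lemma~\ref{4l2}) and the pressure-recovery result (Lemma~\ref{4s3}). First I would extract a crude bound on the control from the optimality of $\ov{\bs\t}_{\bs\ep}$: since $J_\ep(\ov{\bs\t}_{\bs\ep}) \le J_\ep(\bs 0)$ and $J_\ep(\bs 0) = \tfrac12\int_{{\mathcal O}_\ep^*}|\u_{\bs\ep}(\bs 0)-\u_{\bs d}|^2$, I would first note that $\u_{\bs\ep}(\bs 0)$ is bounded in $\He1$ uniformly in $\ep$ (test \eqref{2e3} with $\bs\phi=\u_{\bs\ep}(\bs 0)$, use $\operatorname{div}(\u_{\bs\ep}(\bs 0))=0$ to kill the pressure term, and apply ellipticity plus Lemma~\ref{4l2}), so $J_\ep(\bs 0)\le C$. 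The regularization term then gives $\tfrac\tau2\|\ov{\bs\t}_{\bs\ep}\|_{(L^2({\mathcal O}_\ep^*))^n}^2 \le C$, which is \eqref{5.2}, and simultaneously $\|\ov{\bs u}_{\bs\ep}-\u_{\bs d}\|_{(L^2({\mathcal O}_\ep^*))^n}\le C$.

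Next I would test the weak form \eqref{2e3} of the state system for $\bs\t_{\bs\ep}=\ov{\bs\t}_{\bs\ep}$ with $\bs\phi=\ov{\bs u}_{\bs\ep}\in\He1$; the incompressibility \eqref{2e4} annihilates the pressure term, leaving $\int_{{\mathcal O}_\ep^*}A_\ep\nabla\ov{\bs u}_{\bs\ep}:\nabla\ov{\bs u}_{\bs\ep} = \int_{{\mathcal O}_\ep^*}\ov{\bs\t}_{\bs\ep}\cdot\ov{\bs u}_{\bs\ep}$. By the uniform ellipticity lower bound $m_1$, Cauchy--Schwarz, Lemma~\ref{4l2}, and \eqref{5.2}, this yields \eqref{5.3}. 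An identical computation on the adjoint system \eqref{4e1}, testing with $\bs\phi=\ov{\bs v}_{\bs\ep}$ and using $\operatorname{div}(\ov{\bs v}_{\bs\ep})=0$ and the uniform ellipticity of $A_\ep^t$ (same constants), gives $m_1\|\nabla\ov{\bs v}_{\bs\ep}\|^2 \le \|\ov{\bs u}_{\bs\ep}-\u_{\bs d}\|_{(L^2)^n}\|\ov{\bs v}_{\bs\ep}\|_{(L^2)^n}$, and combining the $L^2$-bound on $\ov{\bs u}_{\bs\ep}-\u_{\bs d}$ from the previous paragraph with Lemma~\ref{4l2} gives \eqref{5.4}. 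The claim $\ov{\bs\t}_{\bs\ep}\in\He1$ is then immediate from the characterization $\ov{\bs\t}_{\bs\ep}=-\tfrac1\tau\ov{\bs v}_{\bs\ep}$ of Theorem~\ref{4t1}, with the same bound as \eqref{5.4} up to the factor $1/\tau$.

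The pressure estimates \eqref{5.5}--\eqref{5.6} are the crux, and the main obstacle, since without a handle on the pressure one cannot close the system; this is exactly what Lemma~\ref{4s3} is for. Given $\ov p_\ep\in L^2({\mathcal O}_\ep^*)$, choose $\bs g_{\bs\ep}\in\He1$ with $\operatorname{div}(\bs g_{\bs\ep})=\ov p_\ep$ and $\|\nabla\bs g_{\bs\ep}\|_{(L^2)^{n\times n}}\le C({\mathcal O})\|\ov p_\ep\|_{L^2({\mathcal O}_\ep^*)}$; note this is legitimate because $\bs g_{\bs\ep}$ vanishes on all of $\g$, so it is an admissible test function even though the boundary condition on $\Gamma_1^\ep$ is Neumann-type. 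Testing \eqref{2e3} with $\bs\phi=\bs g_{\bs\ep}$ gives $\int_{{\mathcal O}_\ep^*}|\ov p_\ep|^2 = \int_{{\mathcal O}_\ep^*}A_\ep\nabla\ov{\bs u}_{\bs\ep}:\nabla\bs g_{\bs\ep} - \int_{{\mathcal O}_\ep^*}\ov{\bs\t}_{\bs\ep}\cdot\bs g_{\bs\ep}$; bounding the right-hand side by $\big(m_2\|\nabla\ov{\bs u}_{\bs\ep}\| + C\|\ov{\bs\t}_{\bs\ep}\|_{(L^2)^n}\big)\|\nabla\bs g_{\bs\ep}\|\le C\|\ov p_\ep\|_{L^2({\mathcal O}_\ep^*)}$ via Cauchy--Schwarz, Lemma~\ref{4l2}, \eqref{5.2}--\eqref{5.3} and the $L^\infty$-bound $m_2$ on $A_\ep$, and dividing by $\|\ov p_\ep\|_{L^2({\mathcal O}_\ep^*)}$, gives \eqref{5.5}. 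The same argument applied to the adjoint equation \eqref{4e1} with the recovered $\bs g_{\bs\ep}$ for $\ov q_\ep$, using \eqref{5.4} and the $L^2$-bound on $\ov{\bs u}_{\bs\ep}-\u_{\bs d}$ as the right-hand-side data, yields \eqref{5.6}. The only point requiring care throughout is that all constants from ellipticity, $L^\infty$ bounds, Lemma~\ref{4l2}, and Lemma~\ref{4s3} are $\ep$-independent, which is exactly how those lemmas are stated.
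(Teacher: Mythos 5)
Your proposal is correct and follows essentially the same route as the paper: the crude bound from $J_\ep(\ov{\bs\t}_{\bs\ep})\le J_\ep(\bs 0)$ (the paper actually observes that $\u_{\bs\ep}(\bs 0)=\bs 0$, which your own computation also yields), energy estimates for the state and adjoint systems via ellipticity and Lemma~\ref{4l2}, the pressure bounds via the divergence-recovery Lemma~\ref{4s3}, and the $H^1$ regularity of the control from the characterization \eqref{4e2}. The only difference is that you write out explicitly the adjoint estimates that the paper dispatches with ``likewise.''
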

\begin{proof}
Let {\color{black}{ $\bs{u}_{\bs\ep} (\bs 0)$ denotes the solution}} to \eqref{2e2} corresponding to $\bs{\t}_{\bs\ep}=\bs 0$. In view of Lemma \ref{4l2}, one can show that $\Vert {\bs{u}}_{\bs\ep} (\bs0)\Vert_{(L^{2}({\mathcal{O}}_{\ep}^{*}))^n} \le 0$, i.e., $ {\bs{u}}_{\bs\ep} (\bs0) = \bs0$  in ${(L^{2}({\mathcal{O}}_{\ep}^{*}))^n} $. Using this and the optimality of solution  $(\ov{\bs{u}}_{\bs\ep},\ov{{p}}_{\ep}, \ov{\bs{\t}}_{\bs\ep})$ to problem \eqref{OCPE}, we have
\begin{equation*}
		\Vert \ov{\bs{u}}_{\bs\ep} (\ov{\bs{\t}}) - \bs{u_d} \Vert^2_{(L^2({\mathcal{O}}_{\ep}^*))^n} + \tau \Vert \ov{\bs{\t}}_{\bs\ep} \Vert^2_{(L^{2}({\mathcal{O}}_{\ep}^{*}))^n} \le 	\Vert {\bs{u}}_{\bs\ep} (\bs0) - \bs{u_d} \Vert^2_{(L^2({\mathcal{O}}_{\ep}^*))^n} \le C,
\end{equation*}
which gives estimate \eqref{5.2}. Now, let us take $\ov{\bs u}_{\bs\ep}$ as a test function in $(\ref{2e3})$. Considering \eqref{5.2} and the uniform ellipticity condition of matrix $A_{\ep}$, one obtains upon applying the Cauchy-Schwarz inequality along with the  Lemma \ref{4l2}, the following:
\begin{equation*}
		m_1 \Vert\nabla {\ov{\bs{u}}}_{\bs\ep}\Vert^2_{\left( L^2({\mathcal{O}}_{\ep}^*)\right) ^{n\times n}} \le  \int_{{\mathcal{O}}_{\ep}^*} A_{\ep} \nabla{{\ov{\bs{u}}}_{\bs\ep}} \colon  \nabla{{\ov{\bs{u}}}_{\bs\ep}}\ dx \le C\ \Vert {\ov{\bs{\t}}_{\bs\ep}} \Vert _{\left( L^2({\mathcal{O}}^*_{\ep})\right) ^n} \Vert\nabla {\ov{\bs{u}}}_{\bs\ep}\Vert_{\left( L^2({\mathcal{O}}_{\ep}^*)\right) ^{n\times n}},
\end{equation*}
from which estimate  \eqref{5.3} follows.\\[1mm]
Owing to Lemma \ref{4s3}, for given $\ov{p}_{\ep} \in L^2({\mathcal{O}}_{\ep}^*)$,  there exists $\bs{g}_{\ep} \in \He1 $ satisying  $\operatorname{div}(\bs{g}_{\bs\ep})= \ov p_{\ep}$. Corresponding to $\ov{\bs{\t}}_{\bs\ep}$, taking $\bs v = \bs{g}_{\bs\ep}$  in \eqref{2e3}, we get
\begin{equation}\label{4e17}
		\Vert \ov{p}_{\ep}\Vert^2_{L^2({\mathcal{O}}_{\ep}^*)}= 	\int_{{\mathcal{O}}_{\ep}^*} A_{\ep} \nabla{\bs{\ov {u}}}_{\bs\ep} \colon  \nabla{\bs{g}_{\bs\ep}}\ dx- \int_{{\mathcal{O}}^*_\ep}  \ov{\boldsymbol {\theta}}_{\bs\ep} \cdot \bs{g}_{\bs\ep}\, dx.
\end{equation}
In view of \eqref{5.1}, \eqref{5.2} and \eqref{5.3}, and the uniform ellipticity condition of the  matrix $A_{\ep}$, one obtains from (\ref{4e17}) upon employing the Cauchy-Schwarz inequality and Lemma \ref{4l2}, the following: 
\begin{equation*}
		\begin{aligned}
			\Vert \ov{p}_{\ep}\Vert^2_{L^2({\mathcal{O}}_{\ep}^*)} &\le  \left( m_2 \Vert\nabla \ov{\bs u}_{\bs\ep}\Vert_{\left( L^2({\mathcal{O}}_{\ep}^*)\right)^{n\times n}} + C \Vert \ov {\bs \theta}_{\bs\ep}\Vert_{\left( L^2({\mathcal{O}}^*_{\ep})\right) ^n}  \right)  \Vert\nabla {\bs g}_{\bs\ep}\Vert_{\left( L^2({\mathcal{O}}_{\ep}^*)\right)^{n\times n}},	
		\end{aligned}
\end{equation*}
which gives the estimate \eqref{5.5}. Likewise, one can easily obtain the estimates \eqref{5.4} and \eqref{5.6} following the above discussion. Finally, from  \eqref{4e2}, we obtain that $\ov{\bs{\t}}_{\bs\ep} \in (H^1_{\g}({\mathcal{O}}_{\ep}^*))^n $.
\end{proof}

\section{The method of periodic unfolding for perforated domains}\label{Section 4}
We evokes the definition of the periodic unfolding operator and few of its properties as stated in \cite{CDGO2006, CDDGZ2012 }. 
Given $x \in \mathbb{R}^n$, we denote the greatest integer and the fractional parts of $x$ respectively by $[x]_{W}$  and $\{x\}_{W}$. That is, $[x]_{W}=\sum_{j=1}^{n} k_{j} b_{j}$ be the unique integer combination of periods and $\{x\}_{W}=x-[x]_{W}$. In particular, we have for $\varepsilon > 0$,
			$$
			x=\varepsilon\left(\left[\frac{x}{\varepsilon}\right]_{W}+\left\{\frac{x}{\varepsilon}\right\}_{W}\right), \quad \forall\,  x \in \mathbb{R}^{n}.
			$$
\begin{defn}\label{3d1}
The unfolding operator $T^*_\ep : \{{\mathcal{O}}_\ep^*\to\mathbb{R}\}\to\{{\mathcal{O}} \times W^*\to\mathbb{R}\}$ is defined as
\begin{equation*}
					T^*_\ep\left(  u \right) (x,y)  =\left\{\begin{array}{lcl}
						u\left( \ep \left[\frac{x_1}{\ep} \right]_{W} + \ep y\right) & \text{a.e.}& \text{for}\ (x,y) \in \widehat{{\mathcal{O}}}_{\ep} \times W^*, \\
						0& \text{a.e.}& \text{for}\ (x,y) \in {\Lambda}_{\ep} \times W^*.
					\end{array}
					\right. 
				\end{equation*}
\end{defn}
\noindent{Also, for any domain $D \supseteq {\mathcal{O}}_{\ep}^* $ and vector $\bs u = (u_1, \cdots, u_n) \in (\{D\to\mathbb{R}\})^n$, we define its unfolding by}
\begin{equation*}
	T_{\ep}^* (\bs u) := (T_{\ep}^* (u_1), \cdots, T_{\ep}^* (u_n)).
\end{equation*}
		
\begin{prop}\label{3p2} {\color{black} In the following there are }the properties of the unfolding operator:
\begin{enumerate}[label=(\roman*)]
					\item \label{p1}$T^*_{\ep}$ is linear and continuous from $ L^2 ({\mathcal{O}}_{\ep}^*)$  to $ L^2 ({\mathcal{O}} \times W^*)$.\label{3p21}
					\item Let  $ u, v  \in L^2({\mathcal{O}}_\ep^*)$. Then  $T^*_\ep \left(u v\right)=T^*_\ep \left(u\right) T^*_\ep \left(v\right).$\label{3p22}
					\item $\ \text{Let}\ u \in L^{2}\left({\mathcal{O}}\right).$ Then $ T^*_\ep  (u) \rightarrow u\ \text{strongly in}\ L^{2}\left({\mathcal{O}} \times W^*\right).\label{3p23}$	
					\item	$\ \text{Let}\ u \in L^{1}\left({\mathcal{O}}_\ep^*\right).\ \text{Then}$
					$$\int_{\widehat{{\mathcal{O}}}_\ep^*} u(x)\ d x =\int_{{\mathcal{O}}_\ep^*} u(x)\ d x - \int_{\hat{\Lambda}_\ep^*} u(x)\ d x  = \frac{1}{|W^*|}\int_{{\mathcal{O}} \times W^*} T^*_\ep  (u) (x,y)\ d x dy.\label{3p24}$$
					\item $ \ \text{For each}\ \ep>0,\, \text{let}\ \left\{u_{\varepsilon}\right\} \in L^{2}\left({\mathcal{O}}\right)\ \text{and}\   u_{\varepsilon}\to u\  \text{strongly in}\ L^{2}\left({\mathcal{O}} \right).$\\[2mm]
					{Then}\ 
					$			\displaystyle	T^*_\ep  (u_{\varepsilon})\to u\  \text{strongly in}\ L^{2}\left({\mathcal{O}} \times W^*\right).\label{3p25}$ 
					\item Let $v \in L^{2}\left(W^{*}\right)$ be a $W$-periodic function and $v_{\ep}(x)=v\left(\frac{x}{\ep}\right)$. Then,\label{3p26}
					\begin{equation*}
						{T}_{\ep}^{*}\left(v_{\ep}\right)(x, y) =
						\left\{\begin{array}{lcl}
					 v(y)& \text { a.e.}\ \text{for}\ (x,y) \in  \widehat{{\mathcal{O}}}_{\ep} \times W^{*},\\
						 0 &\text { a.e.}\ \text{for}\ (x,y) \in {{{\Lambda}}}_{\ep} \times W^{*}.
						\end{array}
						\right.
					\end{equation*}
				
					\item Let $f_{\ep} \in L^{2}\left({\mathcal{O}}_{\ep}^{*}\right)$ be uniformly bounded. Then, there exists $f \in  L^2({\mathcal{O}} \times W^*)$ such that ${T}_{\ep}^{*}\left(f_{\ep}\right)  \w f \, \text{weakly in}\, L^2({\mathcal{O}} \times W^*)$, and
					$$
					\widetilde{f_{\ep}} \w \frac{1}{|W|} \int_{W^*} f(\cdot, y)\, dy \text { weakly in } L^2({{\mathcal{O}}}). \label{3p27}
					$$
\end{enumerate}
\end{prop}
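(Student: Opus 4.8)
\textbf{Proof proposal for Proposition \ref{3p2}.}

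The plan is to establish each property in turn, leaning on the explicit piecewise definition of $T^*_\ep$ and the fact that the cells $\ep(\zeta+W)$, $\zeta\in\mathcal{Z}_\ep$, tile $\widehat{\mathcal{O}}_\ep$ up to a null set. For \ref{p1}, I would first record the fundamental change-of-variables identity: for $u\in L^1(\mathcal{O}_\ep^*)$, integrating $T^*_\ep(u)$ over $\mathcal{O}\times W^*$ splits as a sum over $\zeta\in\mathcal{Z}_\ep$ of $\int_{W^*}u(\ep\zeta+\ep y)\,dy$, and the substitution $x=\ep\zeta+\ep y$ turns each summand into $\ep^{-n}\int_{\ep(\zeta+W^*)}u(x)\,dx$; summing gives $\ep^{-n}|W^*|^{-1}$ times... more precisely, $\int_{\mathcal{O}\times W^*}T^*_\ep(u)\,dx\,dy = |W^*|\int_{\widehat{\mathcal{O}}_\ep^*}u(x)\,dx$. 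Taking $u$ replaced by $|u|^2$ immediately yields the isometry-up-to-constant $\|T^*_\ep(u)\|_{L^2(\mathcal{O}\times W^*)}^2 = |W^*|\,\|u\|_{L^2(\widehat{\mathcal{O}}_\ep^*)}^2 \le |W^*|\,\|u\|_{L^2(\mathcal{O}_\ep^*)}^2$, which gives both linearity (obvious from the pointwise definition) and continuity. Property \ref{3p22} is then immediate from the pointwise definition, since on $\widehat{\mathcal{O}}_\ep\times W^*$ the operator is just composition with a fixed map and on $\Lambda_\ep\times W^*$ both sides vanish. Item \ref{3p24} is exactly the integration identity just derived, together with the trivial decomposition $\int_{\mathcal{O}_\ep^*} = \int_{\widehat{\mathcal{O}}_\ep^*} + \int_{\hat\Lambda_\ep^*}$.

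For \ref{3p23} and \ref{3p25} I would proceed by the standard density argument. For a function $u\in\mathcal{D}(\mathcal{O})$ (or merely uniformly continuous), the estimate $|T^*_\ep(u)(x,y)-u(x)|\le \omega_u(\ep\,\mathrm{diam}\,W)$ on $\widehat{\mathcal{O}}_\ep\times W^*$ holds with $\omega_u$ the modulus of continuity, because $T^*_\ep(u)(x,y)=u(\ep[x/\ep]_W+\ep y)$ and $|\ep[x/\ep]_W+\ep y - x|\le \ep\,\mathrm{diam}\,W$; on $\Lambda_\ep\times W^*$ one controls $\|u\|_{L^2(\Lambda_\ep)}\to 0$ since $|\Lambda_\ep|\to 0$. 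Hence $T^*_\ep(u)\to u$ in $L^2(\mathcal{O}\times W^*)$ for smooth $u$; an $\ep/3$ argument using the uniform bound on $\|T^*_\ep\|$ from \ref{p1} extends this to all $u\in L^2(\mathcal{O})$, giving \ref{3p23}. For \ref{3p25}, write $T^*_\ep(u_\ep)-u = (T^*_\ep(u_\ep)-T^*_\ep(u)) + (T^*_\ep(u)-u)$; the second term goes to zero by \ref{3p23}, and the first is bounded in norm by $\|T^*_\ep\|\cdot\|u_\ep-u\|_{L^2(\mathcal{O})}\to 0$ by linearity and \ref{p1}. Property \ref{3p26} is a direct computation: for $(x,y)\in\widehat{\mathcal{O}}_\ep\times W^*$ one has $v_\ep(\ep[x/\ep]_W+\ep y) = v([x/\ep]_W + y) = v(y)$ by $W$-periodicity of $v$, since $[x/\ep]_W$ is an integer combination of the $b_i$.

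The last property \ref{3p27} is the one requiring genuine work, so I expect it to be the main obstacle. Given $f_\ep\in L^2(\mathcal{O}_\ep^*)$ uniformly bounded, by \ref{p1} the sequence $T^*_\ep(f_\ep)$ is bounded in $L^2(\mathcal{O}\times W^*)$, so along a subsequence $T^*_\ep(f_\ep)\rightharpoonup f$ weakly for some $f\in L^2(\mathcal{O}\times W^*)$. To identify the weak limit of the zero-extension $\widetilde{f_\ep}$, I would test against $\varphi\in\mathcal{D}(\mathcal{O})$: one has $\int_{\mathcal{O}}\widetilde{f_\ep}\varphi\,dx = \int_{\mathcal{O}_\ep^*}f_\ep\varphi\,dx$, and I would compare this with $\frac{1}{|W^*|}\int_{\mathcal{O}\times W^*}T^*_\ep(f_\ep)T^*_\ep(\varphi)\,dx\,dy$, which equals $\int_{\widehat{\mathcal{O}}_\ep^*}f_\ep\varphi\,dx$ by \ref{3p24} and \ref{3p22}; the difference is $\int_{\hat\Lambda_\ep^*}f_\ep\varphi\,dx$, which tends to zero because $f_\ep$ is bounded in $L^2$ while $|\hat\Lambda_\ep^*|\le|\Lambda_\ep|\to 0$. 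Now pass to the limit in $\frac{1}{|W^*|}\int_{\mathcal{O}\times W^*}T^*_\ep(f_\ep)T^*_\ep(\varphi)\,dx\,dy$: by \ref{3p23} the test factor $T^*_\ep(\varphi)\to\varphi$ strongly (note $\varphi$ is independent of $y$), so the product of weak-times-strong converges to $\frac{1}{|W^*|}\int_{\mathcal{O}\times W^*}f(x,y)\varphi(x)\,dx\,dy = \frac{1}{|W^*|}\int_{\mathcal{O}}\big(\int_{W^*}f(x,y)\,dy\big)\varphi(x)\,dx$. Since $|W^*| = |W|\,\Theta$... more directly $|W^*|$ is just the measure of the reference perforated cell, comparing the two sides and noting $\frac{1}{|W^*|}\cdot\frac{|W^*|}{|W|} = \frac{1}{|W|}$ after accounting for the normalization used in \ref{3p27}, one concludes $\widetilde{f_\ep}\rightharpoonup \frac{1}{|W|}\int_{W^*}f(\cdot,y)\,dy$ weakly in $L^2(\mathcal{O})$; a subsequence argument combined with uniqueness of the limit removes the passage to a subsequence for the $\widetilde{f_\ep}$ statement. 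The delicate points here are the correct bookkeeping of the measure constants $|W|$ versus $|W^*|$ and the verification that the contribution from the boundary layer $\hat\Lambda_\ep^*$ is genuinely negligible, for which the uniform $L^2$ bound on $f_\ep$ is exactly what is needed.
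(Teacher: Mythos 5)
The paper itself offers no proof of this proposition: it is imported verbatim from the unfolding literature (\cite{CDGO2006, CDDGZ2012}). Measured against the standard proofs there, your outline follows the canonical route in every item: the exact integration formula obtained by summing over cells, density plus a uniform operator bound for (iii) and (v), pointwise computation for (ii) and (vi), and weak compactness combined with testing against $T^*_{\ep}(\varphi)$ for (vii).

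There is, however, a genuine computational error in your cornerstone identity. On each cell the function $T^*_{\ep}(u)(x,\cdot)$ is constant in $x$ over the \emph{full} cell $\ep(\zeta+W)$, of measure $\ep^{n}|W|$, while the $y$-integral contributes $\int_{W^*}u(\ep\zeta+\ep y)\,dy=\ep^{-n}\int_{\ep(\zeta+W^*)}u\,dz$; multiplying and summing gives
\begin{equation*}
\int_{{\mathcal{O}}\times W^*}T^*_{\ep}(u)\,dx\,dy \;=\; |W|\int_{\widehat{{\mathcal{O}}}^*_{\ep}}u\,dx,
\end{equation*}
with constant $|W|$, not $|W^*|$ as you wrote. (The printed statement of item (iv) carries the same misprint, but the correct constant is forced by consistency with item (vii) and with the factor $\tfrac{1}{|W|}$ used throughout the unfolded weak formulations of Section \ref{Section 7}, e.g.\ in \eqref{7.8}.) The slip is harmless for (i), where any finite constant yields continuity, but it derails (vii): carrying your constant through, you would conclude $\widetilde{f_{\ep}}\w \tfrac{1}{|W^*|}\int_{W^*}f(\cdot,y)\,dy=\mathcal{M}_{W^*}(f)$ rather than $\tfrac{1}{|W|}\int_{W^*}f(\cdot,y)\,dy$, and the sentence in which you multiply by $|W^*|/|W|$ ``after accounting for the normalization'' is not a justified step --- it is inserted precisely to cancel the wrong constant. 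With $|W|$ in place, the argument for (vii) closes cleanly and no correction factor is needed. Two minor points besides: the convergence of $T^*_{\ep}(f_{\ep})$ in (vii) holds only up to a subsequence, as your own weak-compactness extraction shows, and the subsequence cannot in general be removed without an additional uniqueness argument for $f$; and in (v) you should note that the uniform bound $\|T^*_{\ep}\|\le |W|^{1/2}$ is exactly what makes the three-term splitting uniform in $\ep$.
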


\begin{prop}\label{4p2}
Let ${\mathcal{O}}\subset \mathbb{R}^{n}$ be bounded with Lipschitz boundary. Let $f_{\ep} \in H^{1}({\mathcal{O}}_{\ep}^{*})$ be such that  $f_{\ep} = 0 $ on $  \partial {\mathcal{O}} \cap \partial {\mathcal{O}}_{\ep}^{*}$ and satisfy,
$$
				\left\|\nabla f_{\ep}\right\|_ {\left(L^{2}\left({\mathcal{O}}_{\ep}^{*}\right)\right)^n }  \leq C \footnote[4]{The symbol $C$ represents a generic constant that is positive and independent of $\ep$.} .
$$
Then, there exists $f \in H_{0}^{1}({\mathcal{O}})$ and $\widehat{f} \in L^{2}\left({\mathcal{O}} ; H_{p e r}^{1}\left(W^{*}\right)\right)$ with $\mathcal{M}_{W^*}(\widehat{f})=0$, such that up to a subsequence,
$$\begin{cases} & {T}_{\varepsilon}^{*}\left(\nabla f_{\varepsilon}\right) \w \nabla f +\nabla_{y} \widehat{f} \quad \text { weakly in } \left(L^{2}\left({\mathcal{O}} \times W^{*}\right)\right)	^n, \\  & {T}_{\varepsilon}^{*}\left(f_{\varepsilon}\right)\to f \quad\text { strongly in }   L^{2}\left({\mathcal{O}} ; H^{1}\left(W^{*}\right)\right). \end{cases}
$$
\end{prop}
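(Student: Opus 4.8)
The statement to prove is Proposition~\ref{4p2}, the standard compactness result for the periodic unfolding operator applied to a sequence $f_\ep \in H^1(\mathcal{O}_\ep^*)$ vanishing on the outer part of the boundary with uniformly bounded gradient. Let me sketch the proof.

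The approach: this is the analogue for perforated domains of the classical unfolding compactness theorem (Cioranescu–Damlamian–Griso). The key structural input is Lemma~\ref{4l2} (a uniform Poincaré-type inequality on $\mathcal{O}_\ep^*$ with the boundary condition on $\Gamma_0^\ep$), which gives $\|f_\ep\|_{L^2(\mathcal{O}_\ep^*)} \le C$, and then the properties of $T_\ep^*$ from Proposition~\ref{3p2}, especially boundedness (item (i)), the near-commutation with gradients (which requires a scaling identity $\nabla_y T_\ep^*(f_\ep) = \ep\, T_\ep^*(\nabla f_\ep)$ on $\widehat{\mathcal{O}}_\ep$), and the weak-limit/extension property (item (vii)).

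Key steps in order: First, use Lemma~\ref{4l2} with the hypothesis $\|\nabla f_\ep\|_{(L^2(\mathcal{O}_\ep^*))^n}\le C$ to deduce $\|f_\ep\|_{L^2(\mathcal{O}_\ep^*)}\le C$, hence $\|f_\ep\|_{H^1(\mathcal{O}_\ep^*)}\le C$. Second, by Proposition~\ref{3p2}(i), $\|T_\ep^*(f_\ep)\|_{L^2(\mathcal{O}\times W^*)}$ and $\|T_\ep^*(\nabla f_\ep)\|_{(L^2(\mathcal{O}\times W^*))^n}$ are bounded, so up to a subsequence $T_\ep^*(f_\ep) \rightharpoonup \mathfrak{f}$ weakly in $L^2(\mathcal{O}\times W^*)$ and $T_\ep^*(\nabla f_\ep)\rightharpoonup \mathfrak{g}$ weakly in $(L^2(\mathcal{O}\times W^*))^n$. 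Third, identify the $y$-independence of $\mathfrak{f}$: from the scaling relation $\nabla_y\big(T_\ep^*(f_\ep)\big) = \ep\, T_\ep^*(\nabla f_\ep)$ on $\widehat{\mathcal{O}}_\ep\times W^*$, the right side is $O(\ep)$ in $L^2$, so $\nabla_y \mathfrak{f} = 0$, i.e. $\mathfrak{f}(x,y) = f(x)$ for some $f$ depending only on $x$; moreover by Proposition~\ref{3p2}(vii) applied to $f_\ep$, the extension $\widetilde{f_\ep}\rightharpoonup \frac{1}{|W|}\int_{W^*}\mathfrak{f}\,dy = \Theta f$ weakly in $L^2(\mathcal{O})$, and one checks $f\in H^1_0(\mathcal{O})$ (using that $\widetilde{\nabla f_\ep}$ is bounded and the boundary condition on $\Gamma_0^\ep$; this is where boundedness in $H^1$ and the vanishing trace on $\partial\mathcal{O}\cap\partial\mathcal{O}_\ep^*$ feed in). Fourth, identify $\mathfrak{g}$: write $\mathfrak{g} = \nabla f + \Psi$ where one shows $\Psi = \nabla_y \widehat{f}$ for some $\widehat{f}\in L^2(\mathcal{O};H^1_{per}(W^*))$ with zero mean on $W^*$ — this is done by testing $T_\ep^*(\nabla f_\ep)$ against divergence-free (in $y$) test functions with zero flux through the holes, showing the "remainder" is a $y$-gradient, exactly as in the fixed-domain case, and the corrector $\widehat{f}$ inherits $W$-periodicity from the block structure of $T_\ep^*$. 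Fifth, upgrade the weak convergence $T_\ep^*(f_\ep)\rightharpoonup f$ to strong convergence in $L^2(\mathcal{O};H^1(W^*))$: weak convergence of $T_\ep^*(f_\ep)$ together with the smallness of $\nabla_y T_\ep^*(f_\ep) = \ep T_\ep^*(\nabla f_\ep) \to 0$ strongly, plus a Poincáre–Wirtinger argument on $W^*$ (and control of the mean via the strong convergence of the averages), gives $T_\ep^*(f_\ep)\to f$ strongly in $L^2(\mathcal{O};H^1(W^*))$.

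The main obstacle is Step four, the identification of the oscillating part of the gradient limit as $\nabla_y\widehat{f}$ with the right regularity ($\widehat{f}\in L^2(\mathcal{O};H^1_{per}(W^*))$, zero mean): this requires the right choice of test functions adapted to the perforated cell $W^*$ — namely those that are periodic and have vanishing normal trace on $\partial Y$ — and invoking a de Rham / orthogonality argument to show the remainder annihilates all such divergence-free fields, hence is a gradient in $y$; keeping track of the cutoff near $\Lambda_\ep$ (where $T_\ep^*$ is set to zero) and showing it does not affect the limit is the technical nuisance. Everything else is a routine transcription of the fixed-domain unfolding compactness proof, using Lemma~\ref{4l2} in place of the usual Poincaré inequality and Proposition~\ref{3p2} for the bookkeeping.
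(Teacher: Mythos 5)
The paper does not prove Proposition \ref{4p2} at all: it is imported verbatim from the cited unfolding literature (\cite{CDGO2006, CDDGZ2012}, essentially Theorem 4.6 of the latter), so there is no in-paper argument to compare yours against. Judged on its own, your sketch has the right architecture (uniform bound via Lemma \ref{4l2}, weak compactness of the unfolded sequences, $y$-independence of the limit of $T^*_{\ep}(f_{\ep})$ from the scaling identity $\nabla_y T^*_{\ep}(f_{\ep})=\ep\, T^*_{\ep}(\nabla f_{\ep})$, then identification of the corrector), but two of the steps you flag as routine are exactly where the real work lies, and as written they are gaps rather than proofs.

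First, in Step four you propose the de Rham/orthogonality route (testing against $y$-divergence-free fields with vanishing normal trace on $\partial Y$). This is the two-scale-convergence style argument, not the one used in the unfolding literature; it can be made to work, but in a perforated cell it requires the characterization of $\left\{\nabla_y w : w\in H^1_{per}(W^*)\right\}$ as the orthogonal complement of such fields, which itself needs $W^*$ connected with Lipschitz boundary and is not ``exactly as in the fixed-domain case.'' The standard unfolding proof avoids this entirely by introducing $Z_{\ep}=\ep^{-1}\bigl(T^*_{\ep}(f_{\ep})-\mathcal{M}_{W^*}(T^*_{\ep}(f_{\ep}))\bigr)$, noting $\nabla_y Z_{\ep}=T^*_{\ep}(\nabla f_{\ep})$ is bounded, extracting a weak limit via Poincar\'e--Wirtinger, and recovering periodicity of $\widehat f$ by matching traces on opposite faces of $W$. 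Second, and more seriously, in Step five the phrase ``control of the mean via the strong convergence of the averages'' assumes precisely what must be proved: the piecewise-constant local averages $\mathcal{M}_{W^*}(T^*_{\ep}(f_{\ep}))$ do not converge strongly in $L^2({\mathcal{O}})$ merely because $f_{\ep}$ is bounded in $H^1({\mathcal{O}}_{\ep}^*)$; one needs a genuine compactness argument (comparing averages over adjacent cells by Poincar\'e--Wirtinger on unions of two cells to get a discrete $H^1$-type bound, or an $\ep$-uniform extension operator), and this is also where the geometric hypotheses on $W^*$ enter. Finally, the claim $f\in H^1_0({\mathcal{O}})$ is asserted but not argued; the vanishing trace on $\partial{\mathcal{O}}$ has to be extracted from the condition $f_{\ep}=0$ on $\partial{\mathcal{O}}\cap\partial{\mathcal{O}}_{\ep}^*$ together with the behaviour of the unfolding near the boundary layer $\Lambda_{\ep}$, and is a separate (if standard) step.
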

	
\section{Limit optimal control problem}\label{Section 6}
This section presents the  limit (homogenized) system   corresponding to the problem \eqref{OCPE}, which we considered in the beginning. 
\par Let us consider the  function space
\begin{equation*}
				\left( {{H}_0^1}({\mathcal{O}})\right) ^n := \left\{{\bs {\varphi}} \in {(H^{1}({\mathcal{O}}))^n}\, |\           \boldsymbol{\varphi}|_{\p {\mathcal{O}}} = \bs0\right\},
\end{equation*}
which is a Hilbert space for the norm
\begin{equation*}
				\Vert \bs {\varphi} \Vert_{( H_0^1({\mathcal{O}}))^n} := \Vert \nabla \bs {\varphi} \Vert_{( L^2({\mathcal{O}}))^{n \times n}} \quad \forall \, \bs {\varphi} \in  ( H_0^1({\mathcal{O}}))^n.
\end{equation*} 
\par We now consider the limit OCP associated with the Stokes system
\begin{equation}\label{OCP}
				\inf_{{\bs{\t}} \in (L^2({\mathcal{O}}))^n}\left\{	J(\boldsymbol{\t})=\frac{\Theta}{2} \int_{{\mathcal{O}}}  \abs{\bs{u}- \bs{u_d}}^2\, dx
				+\frac{ \tau \Theta}{2} \int_{{\mathcal{O}}}  |\boldsymbol{\t}|^{2}\, dx \right\}, 
\end{equation}
subject to
\begin{equation}\label{6.1}
				\left\{
				\begin{array}{rlc}
					-\displaystyle \sum\limits_{j, \alpha, \beta =1}^{n} \frac{\partial }{\partial x_{\alpha}} \left( b^{\alpha \beta}_{ij}\ \frac{\partial {{u}_j} }{\partial x_{\beta}} \right)  +\nabla p &=\t \quad \text { in }\ {\mathcal{O}}, \\[2mm]
					\operatorname{div}\left(\boldsymbol{u}\right) &=0 \quad \text { in } {\mathcal{O}}, \\
					\boldsymbol{u} &= \bs 0  \quad \text { on } \p{\mathcal{O}},
				\end{array}
				\right.
\end{equation}
where the tensor $B = (b^{\alpha \beta}_{ij}) = (b^{\alpha \beta}_{ij})_{1\le i, j, \alpha, \beta \le n}$ is constant, elliptic, and  for $1\le i, j, \alpha, \beta \le n$, is given by
\begin{equation*}\label{6.2}
				b^{\alpha \beta}_{ij} = a^{\alpha \beta}_{ij} - \frac{1}{|W^*|} \int_{W^*}	A(y) \nabla_{y}\left(\bs P_j^{\beta} -\bs \chi_j^{\beta}\right)  \colon \nabla_{y} \bs \chi_i^{\alpha}\, dy,
\end{equation*}
with $  a^{\alpha \beta}_{ij} = \frac{1}{|W^*|} \int_{W^*}	A(y) \nabla_{y}\left(\bs P_j^{\beta} -\bs \chi_j^{\beta}\right)  \colon \nabla_{y} P_i^{\alpha}\, dy$ as the entries of the constant tensor $A^0$, $\bs P_j^{\beta} = \bs P_j^{\beta}(y) = (0, \dots, y_j, \dots, 0)$ with $y_j$ at the $\beta$-th position, and for $1\le j, \beta \le n$, the correctors $ (\bs \chi_j^{\beta}, \Pi_j^{\beta}) \in (H^1(W^*))^n \times  L^2(W^*)$ solves the cell problem
\begin{equation}\label{6.3}
				\left\{
				\begin{array}{rlc}
					-\displaystyle \operatorname{div}_{y}\left(  A(y)\nabla_{y} (\bs P_j^{\beta} - \bs \chi_j^{\beta} )\right)  + \nabla_{y} \Pi_j^{\beta}
					&= \bs 0 \quad \text{in } W^*,\\[2mm]
					\displaystyle \bs{\eta} \cdot A(y)\nabla_{y} (\bs P_j^{\beta} - \bs \chi_j^{\beta} )  - \Pi_j^{\beta}  \bs \eta & = \bs 0  \quad \text{on } \p W^*\backslash\p W,\\[2mm]
					\d \operatorname{div}_{y} ( \bs P_j^{\beta} - \bs \chi_j^{\beta})  &=0 \quad	\text{in } W^*,\\[2mm]
					\d (\bs \chi_j^{\beta}, \Pi_j^{\beta})  & W^* \text{- periodic}, \\[2mm]
					\d \mathcal{M}_{W^*} (\bs \chi_j^{\beta})  &= \bs0.
				\end{array}
				\right.
\end{equation}
\par 
The existence of this unique pair $(\bs u, p) \in (H_0^1 ({\mathcal{O}}))^n \times L^2({\mathcal{O}})$ can be found in \cite[Chapter 1]{BLP1978}. Further, the problem \eqref{OCP} is a standard one and there exists a unique weak solution to it, one can follow the arguments introduced in \cite[Chapter 2, Theorem 1.2]{Lions1971}. We call the triplet $(\ov{\bs{u}}, \overline{p} , \ov{\bs{ \t}}) \in 	(H_0^1 ({\mathcal{O}}))^n \times L^2({\mathcal{O}}) \times ( L^2({\mathcal{O}}) )^n $, the optimal solution to \eqref{OCP}, with $\ov{\bs{u}},$ $\overline{p} , $ and $\ov{\bs{ \t}}$ as the optimal state, pressure, and control, respectively.
\par
Now, we introduce the limit adjoint system associated with \eqref{6.1}:
Find  a pair $(\ov {\bs{v}}, \ov q) \in (H_0^1({\mathcal{O}}))^n \times L^2({\mathcal{O}})$ which solves the system
\begin{equation}\label{6.4}
				\left\{
				\begin{array}{rlc}
					-\displaystyle \sum\limits_{i, \alpha, \beta =1}^{n} \frac{\partial }{\partial x_{\beta}} \left( b^{ \beta \alpha}_{ji}\ \frac{\partial {\ov{v}_i} }{\partial x_{\alpha}} \right)  +\nabla \ov q &= \ov {\bs u} - \bs {u_d} \quad \text { in }\ {\mathcal{O}}, \\[2mm]
					\operatorname{div}\left(\boldsymbol{\ov{v}}\right) &=0 \quad \text { in } {\mathcal{O}},
				\end{array}
				\right.
\end{equation}
where the tensor  $B^t = (b^{\beta \alpha }_{ji}) = (b^{ \beta \alpha}_{ji})_{1\le i, j, \alpha, \beta \le n}$  is constant, elliptic, and  for $1\le i, j, \alpha, \beta \le n$, is given by
\begin{equation*}\label{6.5}
				b^{ \beta \alpha}_{ji} = a^{ \beta \alpha}_{ji} - \frac{1}{|W^*|} \int_{W^*}	A^t(y) \nabla_{y}\left(\bs P_j^{\beta} -\bs H_j^{\beta}\right)  \colon \nabla_{y} \bs H_i^{\alpha}\, dy,
\end{equation*}\label{6.6}
with $a^{ \beta \alpha}_{ji} = \frac{1}{|W^*|} \int_{W^*}	A^t(y) \nabla_{y}\left(\bs P_j^{\beta} -\bs H_j^{\beta}\right)  \colon \nabla_{y} \bs P_i^{\alpha}\, dy$ as the entries of the constant tensor $A^t_0$. Also, for $1\le j, \beta \le n$, the correctors $ (\bs{H}_j^{\beta}, Z_j^{\beta}) \in (H^1(W^*))^n \times  L^2(W^*)$ solves the cell problem
\begin{equation}\label{6.7}
				\left\{
				\begin{array}{rlc}
					-\displaystyle \operatorname{div}_{y}\left(  A^t(y)\nabla_{y} (\bs P_j^{\beta} - \bs H_j^{\beta} )\right)  + \nabla_{y} Z_j^{\beta}
					&= \bs 0 \quad \text{in } W^*,\\[2mm]
					\displaystyle \bs{\eta} \cdot {A^t(y)}\nabla_{y} (\bs P_j^{\beta} - \bs H_j^{\beta} )  - Z_j^{\beta}  \bs \eta & = \bs 0  \quad \text{on } \p W^*\backslash\p W,\\[2mm]
					\d \operatorname{div}_{y} ( \bs P_j^{\beta} - \bs H_j^{\beta} )  &=0 \quad	\text{in } W^*,\\[2mm]
					\d (\bs H_j^{\beta}, Z_j^{\beta})  & W^* \text{- periodic}, \\[2mm]
					\d \mathcal{M}_{W^*} (\bs H_j^{\beta})  &= \bs0.
				\end{array}
				\right.
\end{equation}
In the following, we state a result similar to Theorem \ref{4t1} that characterizes the optimal control $\overline{\t}$  in terms of the adjoint state $ \overline{\boldsymbol{v}}$ and the proof of which follows analogous to the standard procedure laid in \cite[Chapter 2, Theorem 1.4]{Lions1971}.
\begin{thm}\label{5t2}
Let $\left(\overline{\bs {u}},\overline{p}, \overline{\boldsymbol{\t}}\right) $ be the optimal solution to \eqref{OCP} and $(\bs{\overline{v}}, \ov q)$ be the corresponding adjoint solution to \eqref{6.4}, then the optimal control is characterized by
\begin{equation}\label{c2}
					\overline{\boldsymbol{\t}} = -\frac{1}{\tau} \bs{\overline{v}}  \text {  a.e. in } {\mathcal{O}}.
\end{equation}
Conversely, suppose that a triplet \ $ (\check {\bs u},\check {p}, \check {\bs {\t}}) \in (H_0^1 ({\mathcal{O}}))^n \times L^2({\mathcal{O}}) \times ( L^2({\mathcal{O}}) )^n $ and a pair $\left(\check{\bs v} ,\check{q}\right)  \in (H_0^1 ({\mathcal{O}}))^n \times L^2({\mathcal{O}})$, respectively, satisfy the following systems:
\begin{equation*}
					\left\{
					\begin{array}{rlc}
						-\displaystyle \sum\limits_{j, \alpha, \beta =1}^{n} \frac{\partial }{\partial x_{\alpha}} \left( b^{\alpha \beta}_{ij}\ \frac{\partial {{\check u}_j} }{\partial x_{\beta}} \right)  +\nabla \check p &= -\frac{1}{\tau } \check {\bs v} \quad \text { in }\ {\mathcal{O}}, \\[2mm]
						\operatorname{div}\left(\boldsymbol{\check u}\right) &=0 \quad \text { in } {\mathcal{O}},
					\end{array}
					\right.
\end{equation*}
and
\begin{equation*}
					\left\{
					\begin{array}{rlc}
						-\displaystyle \sum\limits_{i, \alpha, \beta =1}^{n} \frac{\partial }{\partial x_{\beta}} \left( b^{ \beta \alpha}_{ji}\ \frac{\partial {\check{v}_i} }{\partial x_{\alpha}} \right)  +\nabla \check q &= \check {\bs u} - \bs {u_d} \quad \text { in }\ {\mathcal{O}}, \\[2mm]
						\operatorname{div}\left(\boldsymbol{\check{v}}\right) &=0 \quad \text { in } {\mathcal{O}}.
					\end{array}
					\right.
\end{equation*}
Then, the triplet $\left(\check {\bs{u}},\check {p}, -\frac{1}{\tau} \check {\bs v}\right) $\ is the optimal solution to \eqref{OCP}.
\end{thm}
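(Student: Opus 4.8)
The plan is to mimic the standard Lions-type argument for characterizing optimal controls of linear-quadratic problems, now applied to the \emph{limit} problem \eqref{OCP}--\eqref{6.1}. First I would record that \eqref{OCP} is a convex, coercive (thanks to $\tau\Theta>0$) quadratic minimization over the Hilbert space $(L^2(\mathcal{O}))^n$, so the unique minimizer $\ov{\bs\t}$ is characterized by the Euler inequality $J'(\ov{\bs\t})\cdot(\bs\t-\ov{\bs\t})\ge 0$ for all $\bs\t\in(L^2(\mathcal{O}))^n$; since the admissible set is the whole space, this is in fact an equality $J'(\ov{\bs\t})=\bs0$. Writing $\bs u=\bs u(\bs\t)$ for the (affine) solution operator of \eqref{6.1}, the derivative of the state with respect to $\bs\t$ in direction $\bs h$ solves the same homogenized Stokes system with right-hand side $\bs h$, so
\begin{equation*}
J'(\ov{\bs\t})\cdot\bs h = \Theta\int_{\mathcal{O}}(\ov{\bs u}-\bs{u_d})\cdot\bs z_{\bs h}\,dx + \tau\Theta\int_{\mathcal{O}}\ov{\bs\t}\cdot\bs h\,dx,
\end{equation*}
where $\bs z_{\bs h}$ is the state variation.

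Next I would introduce the adjoint pair $(\ov{\bs v},\ov q)$ solving \eqref{6.4} and use it to rewrite the first integral. The key computation is a duality pairing: testing the variational formulation of the adjoint system \eqref{6.4} (with tensor $B^t$) against $\bs z_{\bs h}$, and testing the linearized state system (with tensor $B$) against $\ov{\bs v}$, the bilinear forms match because $B^t$ is by construction the transpose tensor of $B$ (i.e.\ $\sum b^{\alpha\beta}_{ij}\xi^{\beta}_j\zeta^{\alpha}_i = \sum b^{\beta\alpha}_{ji}\zeta^{\alpha}_i\xi^{\beta}_j$), and the pressure terms drop out against divergence-free fields. This yields
\begin{equation*}
\Theta\int_{\mathcal{O}}(\ov{\bs u}-\bs{u_d})\cdot\bs z_{\bs h}\,dx = \int_{\mathcal{O}}\ov{\bs v}\cdot\bs h\,dx \cdot \Theta,
\end{equation*}
up to bookkeeping of the factor $\Theta$; substituting back gives $\Theta\int_{\mathcal{O}}(\ov{\bs v}+\tau\ov{\bs\t})\cdot\bs h\,dx=0$ for all $\bs h$, hence $\ov{\bs\t}=-\tfrac1\tau\ov{\bs v}$ a.e., which is \eqref{c2}.

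For the converse, I would argue that if $(\check{\bs u},\check p,\check{\bs\t})$ and $(\check{\bs v},\check q)$ solve the coupled optimality system with $\check{\bs\t}=-\tfrac1\tau\check{\bs v}$, then running the above duality identity in reverse shows $J'(\check{\bs\t})\cdot\bs h=\Theta\int_{\mathcal{O}}(\check{\bs v}+\tau\check{\bs\t})\cdot\bs h\,dx=0$ for every $\bs h$; by convexity of $J$ this stationarity point is the global minimizer, so $(\check{\bs u},\check p,-\tfrac1\tau\check{\bs v})$ is the optimal solution to \eqref{OCP}. I would note that uniqueness of solutions to \eqref{6.1}, to \eqref{6.4}, and to \eqref{OCP} (all cited from \cite{BLP1978} and \cite{Lions1971}) guarantees the converse identification is unambiguous.

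The only genuine obstacle is the adjoint duality bookkeeping: one must verify carefully that the weak formulation of \eqref{6.4} with the \emph{transposed} tensor $B^t$ pairs correctly with the linearized primal equation, i.e.\ that the index convention $b^{\beta\alpha}_{ji}$ in \eqref{6.4}--\eqref{6.7} really is the algebraic transpose of $b^{\alpha\beta}_{ij}$ in \eqref{6.1}--\eqref{6.3} (this uses $A^t$ in the cell problems \eqref{6.7} versus $A$ in \eqref{6.3} and the symmetry of the corrector construction), and that the constant factor $\Theta$ appears identically on both sides so that it cancels. Everything else is the routine Lions machinery, and I would simply cite \cite[Chapter 2, Theorem 1.4]{Lions1971} for the structure while spelling out the tensor-transpose point.
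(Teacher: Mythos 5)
Your proposal is correct and matches the paper's approach: the paper gives no detailed proof of this theorem, stating only that it ``follows analogous to the standard procedure laid in \cite[Chapter 2, Theorem 1.4]{Lions1971},'' which is exactly the Lions-type convexity/adjoint-duality argument you spell out. Your explicit attention to the tensor-transpose bookkeeping ($b^{\beta\alpha}_{ji}$ versus $b^{\alpha\beta}_{ij}$ and the cancellation of the factor $\Theta$) is a reasonable filling-in of details the paper leaves implicit.
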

			
\section{Convergence results}\label{Section 7}
We present here the key findings on the convergence analysis of the optimal solutions to the problem  \eqref{OCPE} and its corresponding adjoint system \eqref{4e1} by using the method of periodic unfolding for perforated domains  described in Section \ref{Section 4}.
\begin{thm}\label{6t1}
For given $\ep >0$, let the triplets $(\ov\u_{\bs\ep},\ov p_\ep,\ov\t_{\bs\ep})$ and $(\ov\u,\ov p,\ov\t)$, respectively, be the optimal solutions of the problems  \eqref{OCPE} and \eqref{OCP}. Then
\begin{subequations}\label{yy}
\begin{align}
						&T^*_{\ep} (A_{\ep}) \rightarrow A \quad \text{strongly in}\  (L^2({\mathcal{O}} \times W^*))^{n\times n},\label{yya} \\
						&\widetilde{\ov{\bs{\t}}_{\bs\ep}} \w 	 \Theta\, \ov{\bs{\t}}  \quad \text{weakly in}\ \left( L^2\left({\mathcal{O}} \right) \right)^n,\label{yyf}\\
						&\widetilde{\ov {\bs{u}}_{\bs\ep}} \w \Theta\, \ov{\bs{u}} \quad \text{weakly in}\ (H_0^1({\mathcal{O}}))^n,\label{yyb}\\
						&\widetilde{\ov {\bs{v}}_{\bs\ep}} \w \Theta\, \ov{\bs{v}} \quad \text{weakly in}\ (H_0^1({\mathcal{O}}))^n,\label{yyc}\\
						&\widetilde{\ov {p}_\ep} \w  \frac{\Theta}{n}\, A_0  \nabla {\bs {\ov u}} \colon I + \Theta\, \ov p \quad \text{weakly in}\ L^2({\mathcal{O}}),\label{yyd}\\
						&\widetilde{\ov {q}_\ep} \w  \frac{\Theta}{n}\, A^t_0  \nabla {\bs {\ov v}} \colon I + \Theta\, \ov q \quad \text{weakly in}\ L^2({\mathcal{O}}),\label{yye}
\end{align} 
\end{subequations}
where $A^0$ is a  tensor as defined in Section \ref{Section 6}, $I$ is the ${n \times n}$ identity matrix,  $\displaystyle \ov{\bs{\t}}$ is characterized through \eqref{c2} and the pairs $\left( {\ov{\bs {v}}}_{\bs\ep},	{\ov{{q}}}_\ep \right) $ and $	\left( {\ov{\bs {v}}}, \ov q \right) $  solve respectively the systems \eqref{4e1} and \eqref{6.4}.\\[1mm]
Moreover, 
\begin{equation}\label{J1}
					\lim_{\ep\to0} J_{\ep}(\boldsymbol{\ov \t}_{\bs\ep}) = J(\boldsymbol{\ov \t}).
\end{equation}
\end{thm}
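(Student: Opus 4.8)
\textbf{Overall plan.} The argument proceeds in the standard two-stage homogenization-of-OCP fashion: first pass to the limit in the state and adjoint systems for a fixed (but eventually optimal) control, identifying the homogenized Stokes operator through the cell problems \eqref{6.3} and \eqref{6.7}; then use the optimality characterizations (Theorems \ref{4t1} and \ref{5t2}) to close the loop and identify the limit control, and finally handle the cost functional. Throughout I would lean on the a priori bounds \eqref{5.2}--\eqref{5.6} to extract weakly convergent subsequences, on Proposition \ref{4p2} to get the two-scale structure of the gradients, and on Proposition \ref{3p2}\ref{p1}--\ref{3p27} to translate unfolded convergences back to convergences of zero-extensions on $\mathcal{O}$. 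Uniqueness of the limit OCP solution then upgrades subsequential convergence to full convergence.

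\textbf{Step 1 (compactness).} From \eqref{5.3}--\eqref{5.6} and Lemma \ref{4l2}, the families $\ov{\bs u}_{\bs\ep}$, $\ov{\bs v}_{\bs\ep}$ are bounded in $\He1$ and $\ov p_\ep,\ov q_\ep$ in $L^2({\mathcal{O}}_\ep^*)$; by \eqref{4e2} and \eqref{5.2}, $\widetilde{\ov{\bs\t}_{\bs\ep}}$ is bounded in $(L^2({\mathcal{O}}))^n$. Passing to a subsequence: by Proposition \ref{4p2} there are $\ov{\bs u}\in(H_0^1({\mathcal{O}}))^n$ and $\widehat{\bs u}\in L^2({\mathcal{O}};(H^1_{per}(W^*))^n)$ with $T^*_\ep(\nabla\ov{\bs u}_{\bs\ep})\rightharpoonup\nabla\ov{\bs u}+\nabla_y\widehat{\bs u}$ and $T^*_\ep(\ov{\bs u}_{\bs\ep})\to\ov{\bs u}$ in $L^2({\mathcal{O}};H^1(W^*))$; similarly for $\ov{\bs v}_{\bs\ep}$ with limits $\ov{\bs v},\widehat{\bs v}$; by Proposition \ref{3p2}\ref{3p27}, $T^*_\ep(\ov p_\ep)\rightharpoonup P$ and $T^*_\ep(\ov q_\ep)\rightharpoonup Q$ in $L^2({\mathcal{O}}\times W^*)$, with $\widetilde{\ov p_\ep}\rightharpoonup\frac1{|W|}\int_{W^*}P\,dy$, etc. Convergence \eqref{yya} is Proposition \ref{3p2}\ref{3p26} applied to the $W$-periodic matrix $A$, and the divergence-free constraints pass to the limit to give $\operatorname{div}_y\widehat{\bs u}=\operatorname{div}_x\ov{\bs u}=0$ and similarly for $\ov{\bs v}$.

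\textbf{Step 2 (identification of the cell and homogenized equations).} In the weak formulation \eqref{2e3} with $\bs\theta_{\bs\ep}=\ov{\bs\t}_{\bs\ep}$ I would use two families of test functions. Taking $\bs\phi(x)=\ep\,\bs\psi(x)\,\bs w(x/\ep)$ with $\bs w$ being $W$-periodic, $\bs\psi\in\mathcal{D}({\mathcal{O}})$, unfolding, and letting $\ep\to0$ isolates the $y$-problem: $\widehat{\bs u}$ is a linear combination of the correctors $\bs\chi_j^\beta$ (the appearance of $\bs P_j^\beta$ coming from the $\nabla_x\ov{\bs u}$ term), and $P$ splits as $P(x,y)=\Pi(x,y)+\ov p(x)$ where $\Pi$ is the corresponding combination of the cell pressures $\Pi_j^\beta$; this is exactly cell problem \eqref{6.3}. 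Taking $\bs\phi=\bs\psi\in(H_0^1({\mathcal{O}})\cap\text{div-free})^n$ independent of $y$, unfolding, and using Proposition \ref{3p2}\ref{3p24} together with the corrector structure yields the homogenized state equation \eqref{6.1} with tensor $B$ as defined, and the mean-value over $W^*$ produces the factor $\Theta$ in \eqref{yyb} (and \eqref{yyf} via \eqref{3p27}). The pressure limit \eqref{yyd} comes from averaging $P=\Pi+\ov p$ over $W^*$: $\widetilde{\ov p_\ep}\rightharpoonup\Theta\,\ov p+\frac1{|W|}\int_{W^*}\Pi\,dy$, and evaluating that corrector-pressure mean via the cell equations and the definition of $A^0$ gives the $\frac{\Theta}{n}A_0\nabla\ov{\bs u}\colon I$ term. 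Repeating verbatim with $A^t$ in the adjoint formulation \eqref{4e1} gives \eqref{yyc}, \eqref{yye}, the transpose cell problem \eqref{6.7}, and equation \eqref{6.4}.

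\textbf{Step 3 (identifying the limit control and the cost).} Passing to the limit in the optimality inequality $\int_{{\mathcal{O}}_\ep^*}(\ov{\bs v}_{\bs\ep}+\tau\ov{\bs\t}_{\bs\ep})\cdot(\bs\t-\ov{\bs\t}_{\bs\ep})\ge0$ — or more directly, unfolding the relation $\ov{\bs\t}_{\bs\ep}=-\frac1\tau\ov{\bs v}_{\bs\ep}$ from \eqref{4e2} and using $\widetilde{\ov{\bs v}_{\bs\ep}}\rightharpoonup\Theta\ov{\bs v}$ — shows the weak limit of $\widetilde{\ov{\bs\t}_{\bs\ep}}$ is $-\frac{\Theta}{\tau}\ov{\bs v}$, which by the converse part of Theorem \ref{5t2} (with $\ov{\bs\t}=-\frac1\tau\ov{\bs v}$, \eqref{c2}) identifies $(\ov{\bs u},\ov p,\ov{\bs\t})$ as \emph{the} optimal solution of \eqref{OCP}; uniqueness then removes the subsequence. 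For \eqref{J1} I would rewrite $J_\ep(\ov{\bs\t}_{\bs\ep})=\frac12\|\ov{\bs u}_{\bs\ep}-\bs u_d\|^2_{L^2({\mathcal{O}}_\ep^*)}+\frac\tau2\|\ov{\bs\t}_{\bs\ep}\|^2_{L^2({\mathcal{O}}_\ep^*)}$ using Proposition \ref{3p2}\ref{3p24} as $\frac1{|W^*|}$ times integrals over ${\mathcal{O}}\times W^*$ of $|T^*_\ep(\ov{\bs u}_{\bs\ep})-T^*_\ep(\bs u_d)|^2$ and $|T^*_\ep(\ov{\bs\t}_{\bs\ep})|^2$ (up to the $\hat\Lambda^*_\ep$ remainder, which vanishes); then $T^*_\ep(\ov{\bs u}_{\bs\ep})\to\ov{\bs u}$ strongly (Proposition \ref{4p2}) and $T^*_\ep(\bs u_d)\to\bs u_d$ strongly (Proposition \ref{3p2}\ref{3p23}) handle the first term, while for the control term I would instead appeal to strong convergence obtained by combining the weak convergence $\widetilde{\ov{\bs\t}_{\bs\ep}}\rightharpoonup\Theta\ov{\bs\t}$ with $\limsup$ of the energy identity (testing the state system with $\ov{\bs u}_{\bs\ep}$) to upgrade to convergence of norms; passing to the limit then gives $\frac{\Theta}{|W|}$ times the integrals defining $J(\ov{\bs\t})$, i.e.\ $\Theta\cdot(\text{density integrals})$, matching \eqref{OCP}.

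\textbf{Main obstacle.} The delicate point is the identification of the limit pressures \eqref{yyd}--\eqref{yye}: one must show the weak $L^2({\mathcal{O}}\times W^*)$-limit of $T^*_\ep(\ov p_\ep)$ decomposes into a macroscopic part $\ov p(x)$ plus a corrector part whose $W^*$-average is not zero but precisely $\frac1n A_0\nabla\ov{\bs u}\colon I$. This requires constructing the right test functions to extract the cell pressure, carefully using the Neumann condition on $\Gamma_1^\ep$ (which survives unfolding as the $\p W^*\setminus\p W$ condition in \eqref{6.3}), and exploiting the trace of $\nabla_y\bs\chi_i^\alpha$ against the cell equation; getting the constant $\frac1n$ and the correct tensor $A_0$ (rather than $B$) is where most of the bookkeeping lies. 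A secondary technical nuisance is controlling the boundary-layer region $\hat\Lambda_\ep^*$ throughout, which is handled by the $\Lambda_\ep$-part of the unfolding operator vanishing and by $|\Lambda_\ep|\to0$.
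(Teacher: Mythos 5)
Your strategy coincides with the paper's proof in all essentials: the same a priori bounds and unfolding compactness, the same two families of test functions ($y$-independent ones and oscillating ones of the form $\ep\,\psi(x)\,\bs w(x/\ep)$) to derive the two-scale system, the same corrector ansatz reducing $\widehat{\bs u},\widehat p$ to the cell problems \eqref{6.3}, \eqref{6.7}, the same identification of $\mathcal{M}_{W^*}(\hat p)$ producing the $\tfrac{1}{n}A_0\nabla\ov{\bs u}\colon I$ shift in \eqref{yyd}--\eqref{yye}, and the same closing move via Theorem \ref{5t2} plus uniqueness of the limit optimal solution. Your handling of the first term of $J_\ep$ (direct passage to the limit in $\|T^*_\ep(\ov{\bs u}_{\bs\ep})-T^*_\ep(\bs u_d)\|^2_{L^2({\mathcal{O}}\times W^*)}$ using the strong convergences of Propositions \ref{4p2} and \ref{3p2}\,\ref{3p23}) is a legitimate and arguably more direct alternative to the paper's route, which instead tests the adjoint system with $\ov{\bs u}_{\bs\ep}-\bs u_d$ and compares with the limit adjoint formulation.

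The one step that does not work as written is your treatment of the control term $\tfrac{\tau}{2}\int_{{\mathcal{O}}_\ep^*}|\ov{\bs\t}_{\bs\ep}|^2$. Testing the state system with $\ov{\bs u}_{\bs\ep}$ gives the energy identity $\int A_\ep\nabla\ov{\bs u}_{\bs\ep}\colon\nabla\ov{\bs u}_{\bs\ep}=\int\ov{\bs\t}_{\bs\ep}\cdot\ov{\bs u}_{\bs\ep}$ (the pressure term drops by incompressibility); a $\limsup$ argument on this identity upgrades the convergence of $T^*_\ep(\nabla\ov{\bs u}_{\bs\ep})$, but it yields no information on $\|\ov{\bs\t}_{\bs\ep}\|_{L^2}^2$, so it cannot by itself convert the weak convergence $\widetilde{\ov{\bs\t}_{\bs\ep}}\rightharpoonup\Theta\,\ov{\bs\t}$ into convergence of norms. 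The correct (and the paper's) route is the one you already use elsewhere: by \eqref{4e2}, $|T^*_\ep(\ov{\bs\t}_{\bs\ep})|^2=\tau^{-2}|T^*_\ep(\ov{\bs v}_{\bs\ep})|^2$, and the \emph{strong} convergence $T^*_\ep(\ov{\bs v}_{\bs\ep})\to\bs v_0=\ov{\bs v}$ in $(L^2({\mathcal{O}};H^1(W^*)))^n$ from Proposition \ref{4p2} gives $\lim_{\ep\to0}\tfrac{\tau}{2}\int_{{\mathcal{O}}_\ep^*}|\ov{\bs\t}_{\bs\ep}|^2=\tfrac{\Theta\tau}{2}\int_{{\mathcal{O}}}|\ov{\bs\t}|^2$ directly. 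With that substitution your argument is complete; do also record (as the paper tacitly does) that the residual integrals over $\hat\Lambda_\ep^*$ vanish, which for the fixed function $\bs u_d$ is absolute continuity of the integral and for the $\ep$-dependent sequences follows from the uniform $H^1$ bounds.
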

\begin{proof}
First, upon using  Proposition \ref{3p2}\,$\ref{3p26}$ on the entries of the matrix $A_{\ep}$, we obtain \eqref{yya} under the passage of limit $\ep \to 0$. Similarly, one can prove the convergence for the matrix $A^t_{\ep}$ under unfolding.
Next, in view of Theorem \ref{4t4} and the fact that the triplet $(\ov\u_{\bs\ep},\ov p_\ep,\ov\t_{\bs\ep})$ is an optimal solution to problem \eqref{OCPE}, one gets  uniform estimates for the sequences $\{\ov{\bs{\t}}_{\bs\ep}\}$, $\{{\bs{\ov u}}_{\bs\ep}$\}, $\{{\ov p}_\ep\}$, $\{{\bs{\ov v}}_{\bs\ep}\}$, and $\{{\ov q}_\ep\}$ in the spaces $(L^2\left({\mathcal{O}}_{\ep}^*\right))^n$, $(H^1_{\g}({\mathcal{O}}_{\ep}^*))^n$, $L^2\left({\mathcal{O}}_{\ep}^*\right)$, $(H^1_{\g}({\mathcal{O}}_{\ep}^*))^n$, and $L^2\left({\mathcal{O}}_{\ep}^*\right)$, respectively.\\[1mm]
Using the uniform estimate of the sequence $\{\ov{\bs{\t}}_{\bs\ep}\}$   in the space $\left(L^2\left({\mathcal{O}}^*_{\ep} \right)\right) ^n$ and Proposition \ref{3p2}\,$\ref{3p21},$ we have the sequence $\{T^*_{\ep}(\ov{\bs{\t}}_{\bs\ep})\}$ to be  uniformly bounded in the space $\left(L^2\left({\mathcal{O}} \times W^* \right)\right) ^n$. Thus,  by weak compactness, there exists a subsequence not relabelled and a function $\bs{\hat {\t}} $ in $\left(L^2\left({\mathcal{O}} \times W^* \right)\right) ^n$, such that
\begin{equation}\label{7e1}
					T^*_{\ep}(\ov{\bs{\t}}_{\bs\ep}) \w \bs{\hat{\t}} \quad \text{weakly in}\  \left(L^2\left({\mathcal{O}} \times W^* \right)\right) ^n.
\end{equation}
Now, using Proposition \ref{3p2}\,$\ref{3p27}$ in \eqref{7e1} gives
\begin{equation}\label{7.3}
					\widetilde{\ov{\bs{\t}}_{\bs\ep}}  \w \frac{1}{|W|} \int_{W^*}\bs{\hat {\t}}(x,y)\, dy = \Theta \, \bs{\t}_0 \quad \text{weakly in}\  \left(L^2\left({\mathcal{O}} \right)\right) ^n,
\end{equation}
where, $\bs{\t}_0 = \mathcal{M}_{W^*} (\hat{\t})$.\\[1mm]
Employing Proposition \ref{3p2}\,$\ref{3p21}$, we have the uniform boundedness of the sequences $\{T^{\ep} (\ov{\bs{u}}_{\bs\ep})\}$, $\{T^{\ep} (\nabla{\ov{\bs{u}}}_{\bs\ep})\},$ and $\{T^{\ep} (\ov{{p}}_{\ep})\}$ in the respective spaces $ (L^2({\mathcal{O}};  H^1\left( W^* \right)))^n,$ $ (L^2({\mathcal{O}} \times W^*))^{n\times n},$ and $L^2({\mathcal{O}} \times W^*)$. Further, upon  employing Proposition \ref{4p2} and Proposition \ref{3p2}\,$\ref{3p27}$, there exist subsequences not relabelled and  functions $\bs{\hat{u}}$ with $\mathcal{M}_{W^*} (\hat{\u}) = \bs0$, $\bs{u_0},$ and $\hat{p}$ in spaces $ (L^2({\mathcal{O}};  H^1_{per}\left( W^* \right)))^n,$ $ (H^1_0({\mathcal{O}}))^{n},$ and $L^2({\mathcal{O}} \times W^*)$, respectively, such that
\begin{subequations}\label{7.4}
\begin{align}
						&T^*_{\ep} \left( \ov{\bs{u}}_{\bs\ep}\right)  \rightarrow \bs{u_0} \quad \text{strongly in}\  (L^2({\mathcal{O}};  H^1\left( W^* \right)))^n,\label{7.4a}\\
						&T^*_{\ep} \left( \nabla{\ov{\bs{u}}_{\bs\ep}}\right)  \w \nabla{\bs{u_0}} + \nabla_{y}{\bs{\hat u}} \quad \text{weakly in}\  (L^2({\mathcal{O}} \times W^*))^{n \times n},\label{7.4b}\\
						&\widetilde{\ov {\bs{u}}_{\bs\ep}} \w \Theta\, {\bs{u_0}} \quad \text{weakly in}\ (H_0^1({\mathcal{O}}))^n,\label{7.4c}\\
						&T^*_{\ep} \left( \ov{{p}}_{\ep}\right)  \w \hat {p} \quad \text{weakly in}\  L^2({\mathcal{O}} \times W^*),\label{7.4d}\\
						&\widetilde{\ov {p}_\ep} \w  \Theta\, \mathcal{M}_{W^*} (\hat p) \quad \text{weakly in}\ L^2({\mathcal{O}}).\label{7.4e}
\end{align} 
\end{subequations}
Likewise, for the associated adjoint counterparts, viz., $\bs{\ov v}_{\bs\ep}$, and $\ov q_{\ep} $ , one obtains that there exist subsequences not relabelled and  functions $\bs{\hat{v}}$ with  $\mathcal{M}_{W^*} (\hat{\v}) = \bs0$, $\bs{v_0},$ and $\hat{q}$ in spaces $ (L^2({\mathcal{O}};  H^1_{per}\left( W^* \right)))^n,$ $ (H^1_0({\mathcal{O}}))^{n},$ and $L^2({\mathcal{O}} \times W^*)$, respectively, such that
\begin{subequations}\label{7.5}
					\begin{align}
						&T^*_{\ep} \left( \ov{\bs{v}}_{\bs\ep}\right)  \rightarrow \bs{v_0} \quad \text{strongly in}\  (L^2({\mathcal{O}};  H^1\left( W^* \right)))^n,\label{7.5a}\\
						&T^*_{\ep} \left( \nabla{\ov{\bs{v}}_{\bs\ep}}\right)  \w \nabla{\bs{v_0}} + \nabla_{y}{\bs{\hat v}} \quad \text{weakly in}\  (L^2({\mathcal{O}} \times W^*))^{n \times n},\label{7.5b}\\
						&\widetilde{\ov {\bs{v}}_{\bs\ep}} \w \Theta\, {\bs{v_0}} \quad \text{weakly in}\ (H_0^1({\mathcal{O}}))^n,\label{7.5c}\\
						&T^*_{\ep} \left( \ov{{q}}_{\ep}\right)  \w \hat {q} \quad \text{weakly in}\  L^2({\mathcal{O}} \times W^*),\label{7.5d}\\
						&\widetilde{\ov {q}_\ep} \w  \mathcal{M}_{W^*} (\hat q) \quad \text{weakly in}\ L^2({\mathcal{O}}).\label{7.5e}
					\end{align} 
\end{subequations}
The identification of the limit functions $\bs{\hat{u}} $, $\bs{\hat{v}} $, ${\hat{p}} $, ${\hat{q}} $,   $\mathcal{M}_{W^*} (\hat p)$ and $\mathcal{M}_{W^*} (\hat q)$ is carried out in subsequent steps.\\[1mm]
\noindent\textbf{Step 1: (Claim):} For all $\displaystyle \bs \varphi \in (H_{0}^{1}({\mathcal{O}}))^n,\, \bs \psi \in \left(L^{2}\left({\mathcal{O}} ; H_{p e r}^{1}\left(W^{*}\right)\right)\right)^n,$ and $w \in L^2({\mathcal{O}}),$ we claim that the ordered quadruplet $(\bs {u_0},\hat{\bs u},  \hat p, \bs{\t_0})  \in  (H^1_0({\mathcal{O}}))^{n} \times (L^2({\mathcal{O}};  H^1_{per}\left( W^* \right)))^n \times  L^2({\mathcal{O}} \times W^*) \times (L^2({\mathcal{O}}))^n$ is a unique solution to the following limit system:
\begin{equation} \label{7.6}
					\displaystyle \left\{\begin{array}{l}
						\displaystyle \frac{1}{|W|} \int_{{\mathcal{O}} \times W^{*}} A(y)\left(\nabla \bs {u_{0}}+\nabla_{y} \widehat{\bs u}(x,y)\right) \colon \left(\nabla \bs \varphi+\nabla_{y} \bs \psi\right) dx\, dy \\[5mm]- \displaystyle \frac{1}{|W|} \int_{{\mathcal{O}} \times W^{*}} \hat p (x, y) \left( \operatorname{div}(\bs \varphi) + \operatorname{div_{y}}(\bs \psi) \right)  dx\, dy = \Theta \, \int_{{\mathcal{O}}} \bs {\t_0} \cdot \bs \varphi\, dx, \\[6mm]
						and,\displaystyle
						\int_{{\mathcal{O}}} \operatorname{div}(\bs {u_0})\, w\, dx = 0,
					\end{array}\right.
\end{equation}
and the ordered triplet $(\bs {v_0},\hat{\bs v},  \hat q) \in (H^1_0({\mathcal{O}}))^{n}  \times (L^2({\mathcal{O}};  H^1_{per}\left( W^* \right)))^n \times  L^2({\mathcal{O}} \times W^*) $ is a unique solution to the following limit adjoint system:
\begin{equation} \label{7.7}
					\displaystyle \left\{\begin{array}{l}
						\displaystyle \frac{1}{|W|}\int_{{\mathcal{O}} \times W^{*}} A^t(y)\left(\nabla \bs {v_{0}}+\nabla_{y} \widehat{\bs v}(x,y)\right) \colon \left(\nabla \bs \varphi+\nabla_{y} \bs \psi\right) dx\, dy \\[5mm]- \displaystyle \frac{1}{|W|} \int_{{\mathcal{O}} \times W^{*}} \hat q (x, y) \left( \operatorname{div}(\bs \varphi) + \operatorname{div_{y}}(\bs \psi) \right) dx \, dy = \Theta \, \int_{{\mathcal{O}}} (\bs {\u_0} - \bs {u_d}) \cdot \bs \varphi\, dx, \\[6mm]
						and,\displaystyle
						\int_{{\mathcal{O}}} \operatorname{div}(\bs {v_0})\, w\, dx = 0.
					\end{array}\right.
\end{equation}
\noindent\textit{Proof of the Claim:}
Towards the proof of \eqref{7.6}, let us consider a test function ${\bs \varphi} \in (\mathcal{D}({\mathcal{O}}))^n $ in \eqref{2e3} and use properties $\ref{3p21}$, $\ref{3p22}$, and $\ref{3p24}$ of Proposition $\ref{3p2}$ to get
\begin{align}\label{7.8}
					&\frac{1}{|W|} \int_{{\mathcal{O}} \times W^*} T^*_{\ep}(A_{\ep})\, T^*_{\ep}(\nabla{\bs {\ov u}_{\bs\ep}}) \colon T^*_{\ep}(\nabla{\bs \varphi}) \, dx\, dy+ \int_{\hat{\Lambda}_\ep^*} A_{\ep} \nabla{\bs {u}_{\bs\ep}} \colon \nabla {\bs \varphi}\, dx -\int_{\hat{\Lambda}_\ep^*}  p_\ep \operatorname{div}({\bs \varphi}) \, dx \notag \\[1mm]  
					&- \frac{1}{|W|} \int_{{\mathcal{O}} \times W^*} T^*_{\ep}(\ov p_\ep)\ T^*_{\ep}(\operatorname{div}(\bs{\varphi}))\, dx \, dy = \frac{1}{|W|}\int_{{\mathcal{O}\times W^*}}   T^*_{\ep}({{\ov \t_{\bs\ep}}}) \cdot T^*_{\ep}(\bs \phi_{\ep})\, dx\, dy + \int_{\hat{\Lambda}_\ep^*} {\ov \t_{\bs \ep}} \cdot {\bs \varphi}\, dx.
\end{align}
Using Proposition $\ref{3p2}$\,$\ref{3p23}$, the fact that $\lim_{\ep \rightarrow0}|\hat{\Lambda}_\ep^*| = 0,$ and convergences \eqref{7e1}, \eqref{yya}, \eqref{7.4b}, \eqref{7.4d},  we have under the passage of limit $\ep\to0$ in \eqref{7.8}
\begin{align}\label{7.10}
					\displaystyle \frac{1}{|W|} &\int_{{\mathcal{O}} \times W^{*}} A(y)\left(\nabla \bs {u_{0}}+\nabla_{y} \widehat{\bs u}(x,y)\right) \colon \nabla \bs \varphi \, dx\, dy \notag \\[1mm]
					- \displaystyle & \frac{1}{|W|}\int_{{\mathcal{O}} \times W^{*}} \hat p (x, y)  \operatorname{div}(\bs \varphi)\,  dx\, dy = \Theta \int_{{\mathcal{O}}} \bs {\t_0} \cdot \bs \varphi\,  dx, 
\end{align}
which remains valid for every $ \bs \varphi \in (H_0^1({\mathcal{O}}))^n$, by density.\\[1mm]
Now, consider the function $\bs \phi_{\ep}(x) = \ep \phi (x) \bs{\xi}(\frac{x}{\ep})$, where $\phi \in \mathcal{D}({\mathcal{O}})$ and $\bs{\xi} \in (H^1_{per}(W^*))^n$. Employing properties $\ref{3p22}$, $\ref{3p23}$, and $\ref{3p26}$ of Proposition $\ref{3p2}$, one can easily obtain
\begin{subequations}\label{7.12}
					\begin{align}
						&T^*_{\ep} \left( {\bs{\phi}}_{\ep}\right) (x,y) \rightarrow \bs 0 \quad \text{strongly in}\  (L^2({\mathcal{O}} \times W^*))^n,\\
						&T^*_{\ep} \left( \nabla{{\bs{\phi}}_{\ep}}\right)(x,y)  \rightarrow \phi(x) \nabla_{y}{\bs{\xi}(y)} \quad \text{strongly in}\  (L^2({\mathcal{O}} \times W^*))^{n \times n}.
					\end{align}
\end{subequations}
Let us use the  test function $\bs \phi_{\ep}$  in \eqref{2e3} and employ properties $\ref{3p21}$, $\ref{3p22}$, and $\ref{3p24}$ of Proposition  $\ref{3p2}$ to get
\begin{align}\label{ss2}
	 &\frac{1}{|W|}\int_{{\mathcal{O}} \times W^*} T^*_{\ep}(A_{\ep})\, T^*_{\ep}(\nabla{\bs {\ov u}_{\bs\ep}}) \colon T^*_{\ep}(\nabla{\bs \phi_{\ep}}) \, dx\, dy+ \int_{\hat{\Lambda}_\ep^*} A_{\ep} \nabla{\bs {u}_{\bs\ep}} \colon \nabla {\bs \phi_{\ep}}\, dx -\int_{\hat{\Lambda}_\ep^*}  p_\ep \operatorname{div}({\bs \phi_{\ep}}) \, dx \notag \\[1mm]  
	- &\frac{1}{|W|}\int_{{\mathcal{O}} \times W^*} T^*_{\ep}(\ov p_\ep)\ T^*_{\ep}(\operatorname{div}(\bs \phi_{\ep}))\, dx \, dy
	= \frac{1}{|W|}\int_{{\mathcal{O}\times W^*}}   T^*_{\ep}({{\ov \t_{\bs\ep}}}) \cdot T^*_{\ep}(\bs \phi_{\ep})\, dx\, dy + \int_{\hat{\Lambda}_\ep^*} {\ov \t_{\bs\ep}} \cdot \bs \phi_{\ep}\, dx.
\end{align}

In \eqref{ss2}, the absolute value of each integral over $\hat{\Lambda}_\ep^*$ is bounded above with a bound of order $\ep |\hat{\Lambda}_\ep^*|$ or $|\hat{\Lambda}_\ep^*|$. This with the fact that $\lim_{\ep \rightarrow0}|\hat{\Lambda}_\ep^*| = 0$, and convergences \eqref{7e1}, \eqref{yya},  \eqref{7.4b},  \eqref{7.4d}, and  \eqref{7.12}, gives under the passage of limit $\ep\to0$
\begin{equation}\label{7.13}
					\displaystyle \frac{1}{|W|} \int_{{\mathcal{O}} \times W^{*}} A(y)\left(\nabla \bs {u_{0}}+\nabla_{y} \widehat{\bs u}(x,y)\right) \colon \nabla_{y} \bs \psi \, dx\, dy
					- \displaystyle  \frac{1}{|W|}\int_{{\mathcal{O}} \times W^{*}} \hat p (x, y)  \operatorname{div}_{y}(\bs \psi)\,  dx\, dy =  0,
\end{equation}
which remains valid for every $\phi\, \bs \xi  =\bs \psi \in (L^2({\mathcal{O}}; H^1_{per} (W^*)))^n, $ by density.\\[1mm]
Further, for all $w \in L^2({\mathcal{O}})$, we have
\begin{equation}\label{7.15}
					\displaystyle \int_{{\mathcal{O}}^*_{\ep}} \operatorname{div}(\ov{\bs{u}}_{\bs \ep}) w\, dx =  0. 
\end{equation}
Now, upon applying unfolding on \eqref{7.15} and using properties $\ref{3p21}$, $\ref{3p22}$, and $\ref{3p23}$ of Proposition \ref{3p2} along with convergence \eqref{7.4b}, we get under the passage of limit $\ep\to0$
\begin{equation*}\label{7.16}
					\displaystyle \frac{1}{|W|} \int_{{\mathcal{O}} \times W^{*}} \left( \operatorname{div}(\bs{ u_{0}}) + \operatorname{div}_{y}(\bs {\hat u})\right) w\, dx\, dy =  0,
\end{equation*}
which eventually gives upon using the fact that $\bs {\hat u} $ is $W^*-$ periodic, for all $w \in L^2({\mathcal{O}})$:
\begin{equation}\label{7.17}
					\displaystyle \int_{{\mathcal{O}}} \operatorname{div}({\bs{u_0}}) w\, dx =  0.
\end{equation}
Finally, upon adding \eqref{7.10} with \eqref{7.13} and considering \eqref{7.17}, we establish \eqref{7.6}. Likewise, one can easily establish \eqref{7.7}. This settles the proof of the claim.\\[1mm]
				
\noindent\textbf{Step 2:} First, we are going to identify the limit functions  $\bs{\hat{u}} $, $\bs{\hat{v}} $, ${\hat{p}} $, and ${\hat{q}} $. Next, using these identifications, we will identify  $\mathcal{M}_{W^*} (\hat p)$ and $\mathcal{M}_{W^*} (\hat q)$.\\[1mm]
\noindent\textbf{Identification of $\bs{\hat{u}} $, $\bs{\hat{v}} $, ${\hat{p}} $, ${\hat q}$:} Taking sucessively $\bs{\varphi} \equiv 0$ and $\bs{\psi} \equiv 0$ in \eqref{7.6}, yields
\begin{equation}\label{j1}
	\left\{
	\begin{array}{llr}
		&\displaystyle -\operatorname{div}_y (A(y)\nabla_{y} \widehat{\bs u}(x,y)) + \nabla_y {\widehat{ p}(x,y)} = \operatorname{div}_y (A(y))\nabla \bs {u_{0}}(x) \quad \text{in}\ \mathcal{O} \times W^*,\\[2mm]
		& \displaystyle -\operatorname{div}_x \left( \int_{{W^*}} A(y) (\nabla \bs u_{0}(x) + \nabla_{y} \widehat{\bs u}(x,y)) dy \right) + \nabla {\widehat{ p}(x,y)} = |W^*|\, \bs {\t_0} \quad \text{in}\ \mathcal{O}, \\[3mm] 
		& \operatorname{div} (\bs {u_0}) = 0 \quad \text{in}\ \mathcal{O}, \\[2mm] 
		& \widehat{\bs u}(x,\cdot) \quad \text{is}\ W^*-\, \text{periodic}.
	\end{array}
	\right.
\end{equation}
In the first line of \eqref{j1}, we have the $y$-independence of $\nabla {\bs {u_0}}(x)$ and the linearity of operators, viz., divergence and gradient, which suggests $\widehat{\bs u}(x,y)
$
and $ {\widehat{ p}(x,y)}$ to be of the following form (see, for e.g., \cite[Page 15]{S2016}):
\begin{equation}\label{7.18}
	\left\{
	\begin{array}{llr}
		\displaystyle \widehat{\bs u}(x,y) = -\sum\limits_{j, \beta =1}^{n} \bs{\chi}_j^{\beta} (y)\frac{\partial u_{0j} }{\partial x_{\beta}} + \bs{u_1}(x), \\[2mm]
		\displaystyle \widehat{ p}(x,y) = \sum\limits_{j, \beta =1}^{n} \Pi_j^{\beta} (y)\frac{\partial u_{0j} }{\partial x_{\beta}} + {p}_0(x). 
	\end{array}
	\right.
\end{equation}
where the ordered pair $(\bs{u_1}, {p}_0) \in (H^1({\mathcal{O}}))^n   \times L^2({\mathcal{O}})$, and for $1\le j, \beta \le n$, the pair $ (\bs \chi_j^{\beta}, \Pi_j^{\beta})$  satisfy the  cell problem \eqref{6.3}.
Likewise we obtain for the corresponding adjoint weak formulation \eqref{7.7}:
\begin{equation}\label{j2}
	\left\{
	\begin{array}{llr}
		&\displaystyle -\operatorname{div}_y (A(y)\nabla_{y} \widehat{\bs v}(x,y)) + \nabla_y {\widehat{ q}(x,y)} = \operatorname{div}_y (A(y))\nabla \bs {v_{0}}(x) \quad \text{in}\ \mathcal{O} \times W^*,\\[2mm]
		& \displaystyle -\operatorname{div}_x \left( \int_{{W^*}} A(y) (\nabla \bs {v_{0}}(x) + \nabla_{y} \widehat{\bs v}(x,y)) dy \right) + \nabla {\widehat{ q}(x,y)} = |W^*|\, (\bs {\u_0} - \bs u_d) \quad \text{in}\ \mathcal{O}, \\[3mm] 
		& \operatorname{div} (\bs {v_0}) = 0 \quad \text{in}\ \mathcal{O}, \\[2mm] 
		& \widehat{\bs v}(x,\cdot) \quad \text{is}\ W^*-\, \text{periodic},
	\end{array}
	\right.
\end{equation}
and,
\begin{equation}\label{7.19}
					\left\{
					\begin{array}{llr}
						\displaystyle \widehat{\bs v}(x,y) = -\sum\limits_{j, \beta =1}^{n} \bs{H}_j^{\beta} (y)\frac{\partial v_{0j} }{\partial x_{\beta}} + \bs{v_1}(x), \\[2mm]
						\displaystyle \widehat{ q}(x,y) = \sum\limits_{j, \beta =1}^{n} Z_j^{\beta} (y)\frac{\partial v_{0j} }{\partial x_{\beta}} + {q}_0(x), 
					\end{array}
					\right.
\end{equation}
 where the ordered pair $(\bs{v_1}, {q}_0) \in (H^1({\mathcal{O}}))^n   \times L^2({\mathcal{O}})$, and for $1\le j, \beta \le n$, the pair $ (\bs{H}_j^{\beta}, Z_j^{\beta})$  satisfy the  cell problem \eqref{6.7}. \\[1mm]
\noindent\textbf{Identification of $\mathcal{M}_{W^*} (\hat p)$ and $\mathcal{M}_{W^*} (\hat q)$: } Choosing the test function  $ \bs {y} = { (y_1, \dots, y_n)}$  in the weak formulation of \eqref{6.3}, we get
\begin{equation}\label{7.23a}
					\displaystyle \sum\limits_{i, l, k, \alpha =1}^{n} \int_{ W^{*}} a_{lk} \frac{\p}{\p y_k}\left( P_j^{\beta} - {\bs \chi }_j^{\beta}\right) \cdot \frac{\p P_i^{\alpha}}{\p y_l} \frac{\p y_i}{\p y_{\alpha}} \, dy
					= \displaystyle n \int_{ W^{*}} \Pi_j^{\beta} \, dy.
\end{equation}
In view of \eqref{7.4e}, \eqref{7.18}, and \eqref{7.23a}, we observe that
\begin{equation*}
					\displaystyle  \mathcal{M}_{W^*} (\hat p) = \frac{1}{|W^{*}|} \sum\limits_{i, j, l, k, \alpha, \beta =1}^{n} \int_{ W^{*}} a_{lk} \frac{\p}{\p y_k}\left( P_j^{\beta} - {\bs \chi }_j^{\beta}\right) \cdot \frac{\p P_i^{\alpha}}{\p y_l} \frac{\p y_i}{\p y_{\alpha}} \frac{\p { u_{0j}}}{\p x_{\beta}} \, dy + {p}_0, 
\end{equation*}
which upon using the definition of $a_{ij}^{\alpha \beta}$, gives
\begin{equation}\label{eqn7.21}
					\displaystyle  \mathcal{M}_{W^*} (\hat p) = \sum\limits_{i, j, \alpha, \beta =1}^{n}  a_{ij}^{\alpha \beta} \frac{\p { u_{0j}}}{\p x_{\beta}} \frac{\p y_i}{\p y_{\alpha}}   + {p}_0.
\end{equation}
Also, we  re-write the equation \eqref{eqn7.21} to get the identification of $\mathcal{M}_{W^*} (\hat p)$ as
\begin{equation}\label{i1}
					\displaystyle  \mathcal{M}_{W^*} (\hat p) =  A^0 \nabla {\bs {u_0}} \colon I   + {p}_0.
\end{equation}
Likewise, one can obtain the identification  of $\mathcal{M}_{W^*} (\hat q)$ as 
\begin{equation}\label{i2}
					\displaystyle  \mathcal{M}_{W^*} (\hat q) =  A^t_0 \nabla {\bs {v_0}} \colon I   + {q}_0.
\end{equation}
Thus, from \eqref{7.4e} and \eqref{i1}; \eqref{7.5e} and \eqref{i2}, we have the following weak convergences:
\begin{subequations}
\begin{align} 
					\widetilde{\ov {p}_\ep} &\w  \frac{\Theta}{n}\, A_0  \nabla {\bs { u_0}} \colon I + \Theta\,  p_0 \quad \text{weakly in}\ L^2({\mathcal{O}}),\label{PP1}\\
					\widetilde{\ov {q}_\ep} &\w  \frac{\Theta}{n}\, A^t_0  \nabla {\bs {v_0}} \colon I + \Theta\,  q_0 \quad \text{weakly in}\ L^2({\mathcal{O}}).\label{PP2}
\end{align} 
\end{subequations}
			
\noindent\textbf{Step 3: (Claim):} The pairs $(\bs {u_0}, p_0)$ and $(\bs {v_0}, q_0)$ solve the systems \eqref{6.1} and \eqref{6.4}, respectively.\\[1mm]
\noindent\textit{Proof of the Claim:}
We now prove that the pair $(\bs{ u_0}, p_0)$ solves the system \eqref{6.1}. The proof that the pair $(\bs {v_0}, q_0)$  solves the system \eqref{6.4} follows analogously. Substituting the values of $\widehat{\bs u}(x,y)$ and $\widehat{ p}(x,y)$ from expression \eqref{7.18} into equation \eqref{7.10}, we get
\begin{align}\label{7.20}
					\displaystyle \frac{1}{|W|}\sum\limits_{l, k =1}^{n} \int_{{\mathcal{O}} \times W^{*}} a_{lk} \left(\frac{\p {\bs {u_{0}}}}{\p x_k} -  \sum\limits_{j, \beta =1}^{n} \frac{\p {\bs \chi }_j^{\beta}}{\p y_k} \frac{\p { u_{0j}}}{\p x_{\beta}}\right) & \frac{\p {\bs \varphi }}{\p x_l} \, dx\, dy \notag 
					- \displaystyle \frac{1}{|W|}\sum\limits_{j, \beta =1}^{n}\int_{{\mathcal{O}} \times W^{*}} \Pi_j^{\beta} \frac{\p { u_{0j}}}{\p x_{\beta}} \operatorname{div}(\bs \varphi)\,  dx\, dy\\[1mm]
					- \displaystyle & \Theta \int_{{\mathcal{O}}} p_0 \operatorname{div}(\bs \varphi)\,  dx = \Theta \int_{{\mathcal{O}}} \bs {\theta_0} \cdot \bs \varphi\,  dx.
\end{align}
With $P_j^{\beta} = (0, \dots, y_j, \dots, 0)$, we can express the terms $ \frac{\p {\bs u_{0}}}{\p x_k}, \frac{\p {\bs \varphi }}{\p x_l},$ and $\operatorname{div}(\bs \varphi)$ as
\begin{equation*}\label{7.21}
					\begin{array}{llr}
						\displaystyle \frac{\p {\bs {u_0}}}{\p x_k} = \sum\limits_{j, \beta =1}^{n} \frac{\p {P_j^{\beta}}}{\p y_k} \frac{\partial u_{0j} }{\partial x_{\beta}}, \\
						\displaystyle \frac{\p {\bs \varphi }}{\p x_l} = \sum\limits_{i, \alpha =1}^{n} \frac{\p {P_i^{\alpha}}}{\p y_l} \frac{\partial \varphi_{i} }{\partial x_{\alpha}}, \\
						\displaystyle \operatorname{div}(\bs \varphi) = \sum\limits_{i, \alpha =1}^{n} \operatorname{div}_{y}({P_i^{\alpha}}) \frac{\partial \varphi_{i} }{\partial x_{\alpha}}.
					\end{array}
\end{equation*}
Substituting these expressions in \eqref{7.20}, we obtain
\begin{align}\label{7.22}
					\displaystyle & \sum\limits_{i, j, \alpha, \beta =1}^{n}\int_{{\mathcal{O}}} \left(\frac{1}{|W^*|}\sum\limits_{l, k =1}^{n} \int_{ W^{*}} a_{lk} \frac{\p }{\p y_k}\left(  P_j^{\beta} - {\bs \chi }_j^{\beta}    \right) \frac{\p P_i^{\alpha}}{\p y_l}\, dy\right)  \frac{\p { u_{0j}}}{\p x_{\beta}} \frac{\p {\varphi}_i}{\p x_\alpha} \, dx\notag \\[1mm]
					- &\displaystyle \sum\limits_{i, j, \alpha, \beta =1}^{n}\int_{{\mathcal{O}}} \left(\frac{1}{|W^*|} \int_{W^{*}} \Pi_j^{\beta} \operatorname{div}_{y}(P_i^{\alpha}) \, dy \right)\frac{\p { u_{0j}}}{\p x_{\beta}}  \frac{\p {\varphi}_i}{\p x_\alpha} \,  dx
					- \displaystyle  \int_{{\mathcal{O}}} p_0   \operatorname{div}(\bs \varphi)\,  dx = \int_{{\mathcal{O}}} \bs {\t_0} \cdot \bs \varphi\,  dx.
\end{align}		
Now, choosing the test function $ {\bs \chi }_i^{\alpha}$  in the weak formulation of \eqref{6.3}, we get upon using the fact that $ \operatorname{div}_{y}({\bs \chi }_i^{\alpha}) = \operatorname{div}_{y}(P_i^{\alpha})= \delta_{i\alpha}$, where $\delta$ denotes the Kronecker delta function, the following:
\begin{equation}\label{7.23}
					\displaystyle  \int_{ W^{*}} A(y) \nabla_{y}\left( P_j^{\beta} - {\bs \chi }_j^{\beta}\right) \colon \nabla_{y}  {\bs \chi }_i^{\alpha}\, dy
					= \displaystyle  \int_{ W^{*}} \Pi_j^{\beta}  \delta_{i\alpha}\, dy.
\end{equation}
Further, substituting \eqref{7.23} in \eqref{7.22}, we obtain
\begin{align}\label{7.24}
					\displaystyle  \sum\limits_{i, j, \alpha, \beta =1}^{n}\int_{{\mathcal{O}}} \left(\frac{1}{|W^*|}\sum\limits_{l, k =1}^{n} \int_{ W^{*}} a_{lk} \frac{\p }{\p y_k}\left(  P_j^{\beta} - {\bs \chi }_j^{\beta}    \right) \frac{\p }{\p y_l} \left(P_i^{\alpha} - {\bs \chi }_i^{\alpha}\right)\, dy\right)  \frac{\p { u_{0j}}}{\p x_{\beta}} \frac{\p {\varphi}_i}{\p x_\alpha} \, dx\notag \\[1mm]
					- \displaystyle  \int_{{\mathcal{O}}} p_0 \operatorname{div}(\bs \varphi)\,  dx = \int_{{\mathcal{O}}} \bs {\t_0} \cdot \bs \varphi\,  dx.
\end{align}
Also, we can write equation \eqref{7.24} as
\begin{align}\label{7.26}
					\displaystyle  \sum\limits_{i, j, \alpha, \beta =1}^{n}\int_{{\mathcal{O}}} b_{ij}^{\alpha \beta}  \frac{\p { u_{0j}}}{\p x_{\beta}} \frac{\p {\varphi}_i}{\p x_\alpha} \, dx
					- \displaystyle  \int_{{\mathcal{O}}} p_0 \operatorname{div}(\bs \varphi)\,  dx = \int_{{\mathcal{O}}} \bs {\t_0} \cdot \bs \varphi\,  dx,
\end{align}
which holds true for all $\bs{\varphi}\in (H_0^1({\mathcal{O}}))^n$. Also, from equation \eqref{7.17}, we have $\int_{{\mathcal{O}}} \operatorname{div}({\bs{u}}) w\, dx =  0,$ for every $w \in L^2({\mathcal{O}})$. This together with equation  \eqref{7.26} implies that, for $ \bs{\theta} = \bs{\theta_0}$, the pair $(\bs{u_0}, p_0) \in (H_0^1({\mathcal{O}}))^n \times L^2({\mathcal{O}})$ satisfies the variational formulation of the system  \eqref{6.1}.\\[1mm]
\par Therefore, we obtain the optimality system for the minimization problem \eqref{OCP}. Also, in view of  Theorem \ref{5t2}, we conclude  that the triplet $(\bs{u}_0, p_0, \bs{\theta_0})$ is indeed an optimal solution to the problem \eqref{OCP}. Finally, upon considering the optimal solution's uniqueness, we establish that the subsequent pair of triplets are equal:
\begin{equation} \label{7.31}
					(\bs{\ov u}, \ov p, \bs{\ov \t}) = (\bs{u_0}, p_0, \bs{\theta_0}).
\end{equation}
Hence, upon comparing  \eqref{7.4c}, \eqref{7.5c}, \eqref{PP1}, \eqref{PP2}, and \eqref{7.3} with \eqref{7.31}, we obtain convergences  \eqref{yyb}, \eqref{yyc}, \eqref{yyd}, \eqref{yye}, and \eqref{yyf}, respectively. \\[1mm]
\noindent\textbf{Step 4:} Now, we will furnish the proof of the energy convergence for the $L^2-$cost functional.\\[1mm]
Choosing the test function    $({\boldsymbol{\ov u_{\ep}} - \bs{u_d}})$ in the weak formulation of system \eqref{4e1}, we get under unfolding upon passing $\ep\to0$
\begin{align*}
				\lim_{\ep \to 0 }\int_{{\mathcal{O}^*_{\ep}}}  |\boldsymbol {{\ov \u_{\ep}}} -\bs {u_d}|^2\, dx=	\frac{1}{|W|}	\lim_{\ep \to 0 }&\int_{{\mathcal{O}} \times W^*} T^*_{\ep}(A^t_{\ep})\, T^*_{\ep}(\nabla{\bs {\ov v_{\ep}}}) \colon T^*_{\ep}(\nabla({\boldsymbol{\ov u}_{\bs \ep} - \bs{u_d}})) \, dx\, dy \notag\\[1mm]
				&+ \frac{1}{|W|}\lim_{\ep \to 0 }\int_{{\mathcal{O}} \times W^*} T^*_{\ep}(\ov q_\ep)\ T^*_{\ep}(\operatorname{div}({\bs{u_d}}))\, dx \, dy, 
\end{align*}
which gives in view of \eqref{7.31},  Proposition $\ref{3p2}$\,$\ref{3p23}$ and convergences \eqref{7.5a},  \eqref{7.4b}, and \eqref{7.5d}
\begin{align}\label{7.32}
			\lim_{\ep \to 0 }\int_{{\mathcal{O}^*_{\ep}}}  |\boldsymbol {{\ov \u_{\ep}}} -\bs {u_d}|^2\, dx=	\frac{1}{|W|}	& \int_{{\mathcal{O}} \times W^{*}} A^t(y)\left(\nabla \bs {\ov v} +\nabla_{y} \widehat{\bs v}(x,y)\right) \colon \nabla_{y}  \boldsymbol ({{\ov \u}} -\bs {u_d}) \, dx\, dy   \notag\\[1mm]
				&+ \frac{1}{|W|}\int_{{\mathcal{O}} \times W^{*}} \hat q (x, y)  \operatorname{div}(\bs {u_d})\,  dx\, dy.
\end{align}
Also, using  \eqref{7.19} in \eqref{7.32} alongwith \eqref{7.31}, we have upon simplification		
\begin{align}\label{7.33}
			\lim_{\ep \to 0 }\int_{{\mathcal{O}^*_{\ep}}}  |\boldsymbol {{\ov \u_{\ep}}} -\bs {u_d}|^2\, dx = \Theta \left(\sum\limits_{i, j, \alpha, \beta =1}^{n}\int_{{\mathcal{O}}} b_{ji}^{ \beta \alpha}  \frac{\p { \ov{v}_{i}}}{\p x_{\alpha}} \frac{\p {({{\ov u}} - u_d)}_j}{\p x_\beta} \, dx
			- \displaystyle \int_{{\mathcal{O}}} \ov{q} \operatorname{div}(\bs {{\ov \u}} -\bs {u_d})\,  dx\right).
\end{align}
Now, using the test function  $({{\ov \u}} -\bs u_d)$ in the weak formulation of system \eqref{6.4}, we get the following upon comparing  with the right hand side of equation \eqref{7.33}
\begin{equation}\label{7.34}
				\lim_{\ep \to 0 }\int_{{\mathcal{O}^*_{\ep}}}  |\boldsymbol {{\ov \u_{\ep}}} -\bs {u_d}|^2\, dx =\Theta \int_{{\mathcal{O}}}  |\boldsymbol {{\ov \u}} -\bs {u_d}|^2\, dx.
\end{equation} 
Furthermore, in view of \eqref{4e2},  \eqref{7.5a}, and \eqref{7.31}, we get under unfolding upon the passage of limit $\ep \to 0$
		\begin{align}\label{7.36}
		\lim_{\ep \to 0 } \frac{\tau }{2} \int_{{\mathcal{O}^*_{\ep}}}  |\ov{\bs{\t}}_{\bs \ep}|^2\, dx   &=  	\lim_{\ep \to 0 }\frac{1}{2 |W|} \int_{{\mathcal{O} \times W^*}} |T^*_{\ep}(\ov{\bs{\t}}_{\bs \ep})|^2 \, dx \, dy \notag\\[1mm]
		&=\lim_{\ep \to 0 }\frac{1}{2 \tau |W|} \int_{{\mathcal{O} \times W^*}} |T^*_{\ep}(\ov{\bs{v}}_{\bs \ep})|^2 \, dx \, dy\notag\\[1mm]
		 &= \frac{1}{2\tau  |W|} \int_{{\mathcal{O} \times W^*}}   |\ov{\bs v}|^2\, dx \, dy.
	\end{align}
Also, since  $\ov{\bs v}$  is independent of $y$ and comparing the right hand side of \eqref{7.36} with \eqref{c2}, we get 
\begin{equation}\label{7.37}
	\lim_{\ep \to 0 } \frac{\tau }{2} \int_{{\mathcal{O}^*_{\ep}}}  |\ov{\bs{\t}}_{\bs \ep}|^2\, dx =  \frac{\Theta \tau}{2} \int_{{\mathcal{O}}}  | \ov{\t}|^2\, dx.
\end{equation}
Thus, from equations \eqref{7.34} and \eqref{7.37}, we get \eqref{J1}.\\[1mm]	
This completes the proof of Theorem \ref{6t1}.
\end{proof}	
												
\section{Conclusions} 
We have addressed the limiting behavior of an interior OCP corresponding to Stokes equations in an  $n$D $(n\ge 2)$   periodically perforated domain   $\mathcal{O}_ {\ep}^{*}$ via the technique of periodic unfolding in perforated domains (see, \cite{CDGO2006, CDDGZ2012}). We employed the Neumann boundary condition on the part of the boundary of the perforated domain. Firstly, we characterized the optimal control in terms of the adjoint state. Secondly, we deduced the apriori optimal bounds for control, state, pressure, and their associated adjoint state and pressure functions. Thereafter, the limiting analysis for the considered OCP is carried out upon employing the periodic unfolding method in perforated domains. We observed the convergence between the optimal solution to the problem \eqref{OCPE}  posed on the perforated domain $\mathcal{O}_{\ep}^{*}$  and the optimal solution to that of the limit problem \eqref{OCP} governed by stationary Stokes equation posed on a non-perforated domain $\mathcal{O}$. Finally, we established the convergence of energy corresponding to $L^2-$cost functional.

\section{Acknowledgments} \label{sec9}
The first author would like to thank the Ministry of Education, Government of India for Prime Minister's Research Fellowship (PMRF-2900953). The second author would like to thank the support from Science \& Engineering Research Board (SERB) (SRG/2019/000997), Government of India.	
				
\bibliographystyle{amsplain}
													
\bibliography{Ref}
\end{document}